\documentclass[11pt]{article}

\usepackage[margin=1in]{geometry}
\usepackage{amsmath,amssymb,amsthm,mathtools}
\usepackage{bm}
\usepackage{enumitem}
\usepackage{mathrsfs}
\usepackage{bbm}
\usepackage[numbers,sort&compress]{natbib}
\usepackage{hyperref}
\usepackage{authblk}
\allowdisplaybreaks

\hypersetup{
  colorlinks=true,
  linkcolor=blue,
  citecolor=blue,
  urlcolor=blue
}

\newtheorem{theorem}{Theorem}[section]
\newtheorem{proposition}[theorem]{Proposition}
\newtheorem{lemma}[theorem]{Lemma}

\newtheorem{remark}[theorem]{Remark}
\newtheorem{definition}[theorem]{Definition}

\newcommand{\R}{\mathbb R}

\newcommand{\Id}{I_3}
\newcommand{\tr}{\operatorname{tr}}
\newcommand{\diver}{\operatorname{div}}

\newcommand{\supp}{\operatorname{supp}}
\newcommand{\loc}{\operatorname{loc}}
\newcommand{\eps}{\varepsilon}
\newcommand{\dd}{\,d}
\newcommand{\norm}[2]{\left\|#1\right\|_{#2}}

\newcommand{\Szero}{\mathcal S^3_0}
\newcommand{\calF}{\mathcal F}
\newcommand{\calG}{\mathcal G}
\newcommand{\calJ}{\mathcal J}
\newcommand{\calE}{\mathcal E}

\newcommand{\Bbulk}{\mathcal B}

\title{Leray--Hopf Type Weak Solutions for the Three-Dimensional Beris--Edwards System with Stable Landau--de Gennes Potential}
\author[1]{Yao Zhang}
\author[1]{Han Ni Soe}
\author[1]{Zhipeng Xu\thanks{Corresponding author: \texttt{xuzhp@ntu.edu.cn}}}
\affil[1]{School of Mathematics and Statistics, Nantong University, Nantong, China}
\date{\today}

\begin{document}
\maketitle

\begin{abstract}
We prove existence of a weak solution to the three-dimensional Beris--Edwards system in the whole space under the stable bulk assumption $c>0$. The solution satisfies the natural bounds $Q\in L^\infty_tH^1_x\cap L^2_tH^2_x$ and $u\in L^\infty_tL^2_x\cap L^2_tH^1_x$, the distributional form of the equations, and the expanded Leray--Hopf type energy inequality used in weak--strong uniqueness arguments. The proof does not pass directly to the limit in that expanded inequality, where the non-corotational terms contain products of the form $|Q^n|^4Q^n:\nabla u^n$. It first obtains the physical free-energy inequality through a hyperviscous approximation and a localized tail estimate, and then derives the expanded inequality from a low-order chain rule for the bulk part of the energy. The last section records the elementary uniaxial reduction which explains why the present argument is restricted to stable bulk potentials.
\end{abstract}

\tableofcontents

\section{Introduction and statement}

The Beris--Edwards model describes the coupling between an incompressible viscous fluid and a $Q$-tensor order parameter for nematic liquid crystals \cite{BerisEdwards1994,DeGennesProst1995}. The analysis below is set in the whole space and keeps the full tumbling and alignment term with an arbitrary parameter $\xi$. We work with the polynomial Landau--de Gennes bulk potential and impose the stability condition $c>0$, which is used to control the fourth-order part of the potential.

Weak solutions and regular solutions for related $Q$-tensor systems have been studied in several settings, including the corotational system, bounded domains, singular potentials, two-dimensional flows, and maximal-regularity frameworks \cite{PaicuZarnescu2011,PaicuZarnescu2012,AbelsDolzmannLiu2014,CavaterraRoccaWuXu2016,GuillenGonzalezRodriguezBellido2014,GuillenGonzalezRodriguezBellido2015,DuHuWang2020,HieberHusseinWrona2024,FeireislRoccaSchimpernaZarnescu2014,FeireislSchimpernaRoccaZarnescu2015,Wilkinson2015}. The present note isolates a point needed for the weak--strong uniqueness framework of \cite{YangZhou2025}. The goal is to construct a weak solution which satisfies not only the distributional equations but also the expanded Leray--Hopf type energy inequality.

We consider the incompressible Beris--Edwards system in $\R^3$
\begin{equation}\label{eq:BE}
\left\{
\begin{aligned}
&\partial_t Q + u\cdot\nabla Q - S(\nabla u,Q)=\Gamma H,\\
&\partial_t u + u\cdot\nabla u = \mu\Delta u - \nabla p + \diver(\tau+\sigma),\\
&\diver u=0,\\
&(Q,u)|_{t=0}=(Q_0,u_0).
\end{aligned}\right.
\end{equation}
Here $Q(t,x)\in\Szero$ is symmetric and traceless, $u(t,x)\in\R^3$, and
\begin{align}
S(\nabla u,Q)
&:=\xi D\left(Q+\frac13\Id\right)
  +\xi\left(Q+\frac13\Id\right)D
  -2\xi\left(Q+\frac13\Id\right)\tr(Q\nabla u)  \notag\\
&\qquad +\Omega Q-Q\Omega,\label{eq:S}\\
H&:=L\Delta Q-aQ+
 b\left(Q^2-\frac{\tr(Q^2)}3\Id\right)-cQ\tr(Q^2),\label{eq:H}\\
\tau&:=-\xi\left(Q+\frac13\Id\right)H
      -\xi H\left(Q+\frac13\Id\right)
      +2\xi\left(Q+\frac13\Id\right)\tr(QH)
      -L\nabla Q\odot\nabla Q,\label{eq:tau}\\
\sigma&:=QH-HQ.\label{eq:sigma}
\end{align}
We use
\[
D=\frac12(\nabla u+\nabla u^T),\qquad
\Omega=\frac12(\nabla u-\nabla u^T),
\]
and
\[
(\nabla Q\odot \nabla Q)_{ij}=\partial_j Q_{\alpha\beta}\,\partial_i Q_{\alpha\beta}.
\]
Throughout the paper we assume
\begin{equation}\label{eq:params}
L>0,\qquad \mu>0,\qquad \Gamma>0,\qquad c>0,
\end{equation}
while $a,b,\xi\in\R$ are arbitrary.

It is convenient to write
\begin{equation}\label{eq:Bdef}
\Bbulk(Q):=b\left(Q^2-\frac{\tr(Q^2)}3\Id\right)-cQ\tr(Q^2),
\end{equation}
so that
\[
H=L\Delta Q-aQ+\Bbulk(Q).
\]

The physical free energy associated with the polynomial Landau--de Gennes potential is \cite{DeGennesProst1995,BallMajumdar2010}
\begin{equation}\label{eq:Fdef}
\calF(Q):=\int_{\R^3}\left(
\frac L2|\nabla Q|^2+\frac a2|Q|^2-\frac b3\tr(Q^3)+\frac c4|Q|^4
\right)\dd x.
\end{equation}
The stable assumption $c>0$ gives the elementary lower bound
\begin{equation}\label{eq:bulkcoercive}
\frac a2|Q|^2-\frac b3\tr(Q^3)+\frac c4|Q|^4
\ge -C_{a,b,c}|Q|^2+c_0|Q|^4,
\end{equation}
for a positive $c_0$ after decreasing $c_0$ if necessary.

The terminology follows the energy class introduced for the Navier--Stokes equations by Leray and Hopf \cite{Leray1934,Hopf1950}. In the present coupled system the definition also contains the $Q$-tensor energy and the lower-order terms generated by the bulk potential.

\begin{definition}[Leray--Hopf type weak solution]\label{def:LH}
Let $Q_0\in H^1(\R^3;\Szero)$ and $u_0\in L^2_\sigma(\R^3)$.  A pair $(Q,u)$ is called an energy-admissible, or Leray--Hopf type, weak solution to \eqref{eq:BE} on $[0,T]$ if:
\begin{enumerate}[label=(\roman*)]
\item
\[
Q\in L^\infty(0,T;H^1(\R^3))\cap L^2(0,T;H^2(\R^3)),
\]
\[
u\in L^\infty(0,T;L^2_\sigma(\R^3))\cap L^2(0,T;H^1(\R^3));
\]
\item $(Q,u)$ satisfies the distributional forms of \eqref{eq:BE};
\item for every $t\in[0,T]$ the energy inequality
\begin{align}
&\norm{u(t)}{L^2}^2+\norm{Q(t)}{L^2}^2+L\norm{\nabla Q(t)}{L^2}^2
+2\mu\int_0^t\norm{\nabla u}{L^2}^2\dd s \notag\\
&\quad +2a\Gamma\int_0^t\norm{Q}{L^2}^2\dd s
+2(a+1)\Gamma L\int_0^t\norm{\nabla Q}{L^2}^2\dd s
+2\Gamma L^2\int_0^t\norm{\Delta Q}{L^2}^2\dd s \notag\\
&\le \norm{u_0}{L^2}^2+\norm{Q_0}{L^2}^2+L\norm{\nabla Q_0}{L^2}^2 \notag\\
&\quad +2\Gamma\int_0^t(\Bbulk(Q),Q-L\Delta Q)\dd s \notag\\
&\quad +4(1-a)\xi\int_0^t\left(
Q\left(Q+\frac13\Id\right):\nabla u-|Q|^2\tr(Q\nabla u)
\right)\dd s \notag\\
&\quad +4\xi\int_0^t\left(
\Bbulk(Q)\left(Q+\frac13\Id\right):\nabla u-(\Bbulk(Q):Q)\tr(Q\nabla u)
\right)\dd s
\label{eq:LHenergy}
\end{align}
holds.
\end{enumerate}
\end{definition}

\begin{theorem}[Existence in the stable bulk regime]\label{thm:main}
Assume \eqref{eq:params}.  For every
\[
Q_0\in H^1(\R^3;\Szero),\qquad u_0\in L^2_\sigma(\R^3),
\]
there exists a global-in-time Leray--Hopf type weak solution $(Q,u)$ to \eqref{eq:BE} in the sense of Definition \ref{def:LH}.  Moreover, $(Q,u)$ satisfies the physical free-energy inequality
\begin{align}
&\frac12\norm{u(t)}{L^2}^2+\calF(Q(t))
+\int_0^t\left(\mu\norm{\nabla u}{L^2}^2+\Gamma\norm{H}{L^2}^2\right)\dd s\notag\\
&\qquad\le \frac12\norm{u_0}{L^2}^2+\calF(Q_0)
\label{eq:physineq}
\end{align}
for every $t\ge0$.
\end{theorem}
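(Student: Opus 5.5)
The plan follows the route announced in the abstract: first construct a weak solution satisfying the physical free-energy inequality \eqref{eq:physineq}, then deduce the expanded inequality \eqref{eq:LHenergy} from it by an algebraic identity together with a chain rule for the bulk energy. We never pass to the limit directly in \eqref{eq:LHenergy}.

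\textbf{Approximation.} We regularize \eqref{eq:BE} by adding a hyperviscous term $-\eps\Delta^2 u$ (and, if needed, $\eps\Delta^3 Q$ or a mollification of the transport velocity) to the momentum equation. We solve the regularized problem by a Galerkin scheme on balls $B_R$, or directly on $\R^3$ via Fourier truncation. For the approximate system the physical energy identity holds exactly:
\[
\frac12\|u^n(t)\|^2+\calF(Q^n(t))+\int_0^t\big(\mu\|\nabla u^n\|^2+\eps\|\Delta u^n\|^2+\Gamma\|H^n\|^2\big)\dd s=\frac12\|u_0^n\|^2+\calF(Q_0^n).
\]
This rests on the standard cancellations: $(S(\nabla u,Q),H)$ cancels against $(\diver(\tau+\sigma),u)$ apart from the Ericksen part, and $(u\cdot\nabla Q,H)$ cancels against $-L\,\diver(\nabla Q\odot\nabla Q)$. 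Both cancellations are valid at the regularized level because $\eps\|\Delta u\|^2$ gives enough integrability for the cubic and quintic terms tested against $\nabla u$.

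\textbf{Uniform bounds.} By \eqref{eq:bulkcoercive}, the quantity $\calF+C\|Q\|_{L^2}^2$ controls $\|Q\|_{H^1}^2+\|Q\|_{L^4}^4$. An $L^2$ estimate for $Q$, obtained by testing the $Q$-equation with $Q$, grows at most like $e^{Ct}$. Together these give uniform bounds in $L^\infty H^1\cap L^\infty L^4$ for $Q^n$, in $L^\infty L^2\cap L^2H^1$ for $u^n$, and in $L^2L^2$ for $H^n$. Elliptic regularity for $L\Delta Q=H+aQ-\Bbulk(Q)$ then gives $Q\in L^2H^2$, since $|Q|^3\in L^2L^2$ by the $L^\infty H^1\cap L^2H^2$ interpolation. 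Time-derivative bounds in negative Sobolev spaces follow, so Aubin--Lions compactness on balls gives strong convergence of $Q^n$ in $L^2H^1_{\loc}$ and of $u^n$ in $L^2L^2_{\loc}$. With this we pass to the limit in the distributional form, first as $n\to\infty$ and then as $\eps\to0$.

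\textbf{Limit of the energy and main obstacle.} To pass to the limit in the energy we use weak lower semicontinuity for the gradient and dissipation terms. The difficulty lies in $\int|Q|^4$ and in $\int a|Q|^2$ when $a<0$: both need strong $L^2$ or $L^4$ convergence on all of $\R^3$, not just locally. This is the main obstacle, and I would handle it with a localized tail estimate. Test the $Q$-equation with $\phi_R^2 Q$, where the cutoff $\phi_R$ vanishes on $B_R$. The commutator terms carry $\nabla\phi_R=O(1/R)$, and the uniform bounds then give
\[
\sup_n\sup_{t\le T}\int_{|x|>R}|Q^n|^2\to0\quad\text{as } R\to\infty .
\]
Interpolating with the $H^1$ bound upgrades this to strong convergence in $C([0,T];L^p)$ for $2\le p<6$, and in particular for $p=4$. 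Convergence at every time $t$ requires weak continuity of $u$ and $Q$; we obtain \eqref{eq:physineq} for all $t$ by the usual argument, using strong convergence of $Q^n(t)$ in $L^2$.

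\textbf{Expanded inequality.} We rewrite \eqref{eq:physineq} in terms of $\|\Delta Q\|^2$ by expanding
\[
\|H\|^2=\|L\Delta Q-aQ\|^2+2(L\Delta Q-aQ,\Bbulk)+\|\Bbulk\|^2 .
\]
We also need the chain rule
\[
\int\Big(-\tfrac b3\tr Q^3+\tfrac c4|Q|^4\Big)(t)=\int(\cdot)(0)-\int_0^t\big(\partial_tQ,\Bbulk(Q)\big)\dd s .
\]
This identity holds for the limit solution because $\partial_tQ=-u\cdot\nabla Q+S+\Gamma H\in L^{4/3}L^{3/2}$ or better, and $\Bbulk(Q)\in L^4L^{3}$-type spaces dual to it. We justify it by mollifying in time, which is the ``low-order chain rule''. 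Combined with the analogous identity for $\|Q\|_{L^2}^2$, obtained by testing the $Q$-equation with $Q$, this turns the physical inequality into \eqref{eq:LHenergy}. The terms $(S,Q)$ and $(S,\Bbulk)$ produce exactly the $\xi$-terms displayed there.
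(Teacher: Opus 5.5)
Your overall architecture matches the paper's: hyperviscous approximation, the physical energy identity, local Aubin--Lions compactness, a tail estimate to upgrade $Q^\eps\to Q$ to all of $\R^3$, passage to the limit in \eqref{eq:physineq}, and then a time-mollified chain rule. Your bulk chain rule plus the $\norm{Q}{L^2}^2$ identity is exactly the paper's $\calG$, with $A(Q)=(1-a)Q+\Bbulk(Q)$, split in two.

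\textbf{The gap is the tail estimate.} Testing the $Q$-equation with $\phi_R^2Q$ does not leave only commutator terms. The stretching term survives, and since $Q:(\Omega Q-Q\Omega)=0$ it equals
\[
2\xi\int\phi_R^2\Bigl(\tfrac13\,Q:D+\tr(Q^2D)-|Q|^2\tr(QD)\Bigr)\dd x .
\]
This carries no factor $1/R$ and involves $\nabla u$ on the exterior region. With $y_R=\int\phi_R^2|Q|^2$, the uniform bounds only give
\[
\int_0^T\!\int\phi_R^2|Q||\nabla u|\le C\Bigl(\int_0^T y_R\Bigr)^{1/2}.
\]
This is a forcing of order $y_R^{1/2}$ rather than $y_R$, so Gronwall yields boundedness of $y_R$, not smallness. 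Integrating by parts only trades it for $\int\phi_R^2|u||\nabla Q|$, which needs the tail of $u$. Hence your claim $\sup_n\sup_t\int_{|x|>R}|Q^n|^2\to0$ is justified only for $\xi=0$.

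The obstruction is structural. The linear forcing $\frac{2\xi}{3}D$ in the $Q$-equation is compensated only by the term $-\frac{2\xi}{3}H$ in $\tau$, that is, by the momentum equation. So tail control of $Q$ cannot be obtained without tail control of $u$.

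\textbf{The missing step is a localized version of the full energy balance.} Because the cancellation of Lemma \ref{lem:cancellation} is pointwise, $\frac{d}{dt}\int\eta_R e^\eps\dd x+\int\eta_R(\mu|\nabla u^\eps|^2+\Gamma|H^\eps|^2)\dd x$ equals $-\int\calJ^\eps\cdot\nabla\eta_R\dd x$ plus a hyperviscous commutator (Lemma \ref{lem:localphys}). One must check that $\calJ^\eps$ is bounded in $L^1_{t,x}$ uniformly in $\eps$; this includes $p^\eps u^\eps$, handled via Calder\'on--Zygmund. Then this flux term is $O(1/R)$. The argument then closes as follows:
\begin{itemize}
\item Adding $M$ times your localized $Q$ estimate makes the tail energy $Y_R^\eps$ coercive.
\item The stretching term is bounded as in \eqref{eq:QScontrol} by $\delta\int\eta_R|\nabla u^\eps|^2+C_TY_{R/2}^\eps$.
\item The $\delta$-part is absorbed by the localized viscous dissipation.
\item A Gronwall argument on $\lim_R\limsup_\eps\sup_sY_R^\eps(s)$ gives tightness of $u$, $\nabla Q$ and $Q$ together.
\end{itemize}

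With that in place the rest of your plan goes through. Your $C([0,T];L^p)$ convergence for $2\le p<6$, obtained with Simon's compactness in $C([0,T];L^2_{\loc})$ from the time-derivative bounds, gives \eqref{eq:physineq} at every $t$ more cleanly than the paper's a.e.-$t$ argument.

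A minor point: with $c>0$ the quartic term is weakly lower semicontinuous. The terms that actually require global strong convergence are $\frac a2|Q|^2$ when $a<0$ and the sign-indefinite cubic $-\frac b3\tr(Q^3)$.
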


\begin{remark}
The proof is arranged so that the expanded inequality \eqref{eq:LHenergy} is not used as the starting point for compactness. Direct passage to the limit in that inequality would require strong compactness of $|Q^n|^4Q^n$ in $L^2_{t,x}$, or equivalently endpoint compactness of $Q^n$ in $L^{10}_{t,x}$. We instead pass to the limit in the physical free-energy inequality and recover \eqref{eq:LHenergy} after the limit by a chain rule involving only lower-order powers of $Q$.
\end{remark}

\section{Algebraic identities}

The Beris--Edwards stress is chosen so that the stretching term in the $Q$-equation and the elastic stress in the momentum equation cancel in the energy balance \cite{BerisEdwards1994,PaicuZarnescu2011}. We record the algebraic identity in the form used throughout the proof. The calculation is local in $x$ and does not use boundary conditions.

\begin{lemma}[Basic cancellation]\label{lem:cancellation}
Let $Q,H\in\Szero$ and let $\nabla u=D+\Omega$ with $D^T=D$ and $\Omega^T=-\Omega$.  Define $S,\tau,\sigma$ by \eqref{eq:S}, \eqref{eq:tau}, and \eqref{eq:sigma}.  Then
\begin{equation}\label{eq:cancel}
(\tau+\sigma):\nabla u+H:S(\nabla u,Q)+L(\nabla Q\odot\nabla Q):\nabla u=0.
\end{equation}
\end{lemma}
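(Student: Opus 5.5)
The plan is to expand every term of \eqref{eq:cancel} pointwise, using only the symmetry and tracelessness of $Q$ and $H$, the splitting $\nabla u=D+\Omega$, and cyclicity of the trace, with $A:B=\tr(A^TB)$. I will write $P:=Q+\frac13\Id$, so that $P$ is symmetric and $H:P=H:Q=\tr(QH)$ because $\tr H=0$. I will then sort the terms into three groups: the Ericksen term, the $\xi$-dependent alignment terms, and the corotational terms coming from $\Omega$ and $\sigma$. I will show that each group vanishes on its own.

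\emph{Step 1 (Ericksen term).} The contribution of $-L\nabla Q\odot\nabla Q$ in $\tau$ to $(\tau+\sigma):\nabla u$ is exactly $-L(\nabla Q\odot\nabla Q):\nabla u$. This is cancelled by the last term of \eqref{eq:cancel}.

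\emph{Step 2 (alignment terms).} The matrix $PH+HP$ is symmetric, so $-\xi(PH+HP):\nabla u=-\xi(PH+HP):D$. Cyclicity of the trace gives
\[
\xi H:(DP+PD)=\xi\,\tr(HDP+HPD)=\xi(PH+HP):D,
\]
so these two terms cancel. The scalar terms are $2\xi\tr(QH)\,P:\nabla u$ from $\tau$ and $-2\xi\tr(Q\nabla u)\,H:P=-2\xi\tr(Q\nabla u)\tr(QH)$ from $H:S$. Since $P:\nabla u=Q:\nabla u+\frac13\tr\nabla u$ and $Q:\nabla u=\tr(Q\nabla u)$ by symmetry of $Q$, these two terms cancel provided $\tr\nabla u=\diver u=0$.

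\emph{Step 3 (corotational terms).} Because $QH-HQ$ is antisymmetric, $\sigma:\nabla u=(QH-HQ):\Omega=\tr((HQ-QH)\Omega)$. On the other side,
\[
H:(\Omega Q-Q\Omega)=\tr(H\Omega Q)-\tr(HQ\Omega)=\tr((QH-HQ)\Omega).
\]
The two contributions are exact negatives of each other.

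The main obstacle is the $\frac13\Id$ part of $P$ in Step 2, which leaves a residual $\frac{2\xi}{3}\tr(QH)\,\diver u$. The lemma as stated lists only $\nabla u=D+\Omega$ among its hypotheses, so the identity needs the additional assumption $\tr\nabla u=0$. I would make this explicit in the statement, or note that $\nabla u$ is traceless throughout by $\diver u=0$, which is the only case used in the energy estimates. Everything else is routine index bookkeeping. The only care needed is to fix the convention $A:B=\tr(A^TB)$ and the orientation of $\nabla u$ consistently, so that $Q:\nabla u=\tr(Q\nabla u)$ holds.
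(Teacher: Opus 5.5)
Your proof is correct and follows the paper's own argument. It uses the same three-way split into the Ericksen term, the skew part ($\sigma$ against $\Omega Q-Q\Omega$), and the $\xi$-terms with $P=Q+\frac13\Id$; the paper's proof also invokes $\tr D=\diver u=0$ even though the lemma's hypotheses do not list it, so your remark that this assumption must be made explicit is accurate. Your sign in the skew step, $(QH-HQ):\Omega=\tr((HQ-QH)\Omega)=-H:(\Omega Q-Q\Omega)$, is the right one; the paper's displayed step $(QH-HQ):\Omega=\tr((QH-HQ)\Omega)$ is off by a sign, although its conclusion that the two terms cancel is correct.
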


\begin{proof}
The term $-L\nabla Q\odot\nabla Q$ in $\tau$ gives
\[
\left(-L\nabla Q\odot\nabla Q\right):\nabla u
=-L(\nabla Q\odot\nabla Q):\nabla u,
\]
which cancels the last term in \eqref{eq:cancel}.  We only need to check the remaining part.

The skew stress satisfies
\[
\sigma:\nabla u=(QH-HQ):(D+\Omega)=(QH-HQ):\Omega,
\]
because $QH-HQ$ is skew-symmetric and hence orthogonal to $D$.  By cyclicity of the trace,
\[
(QH-HQ):\Omega=\tr((QH-HQ)\Omega)=\tr(H(\Omega Q-Q\Omega)).
\]
This cancels precisely the contribution $H:(\Omega Q-Q\Omega)$ in $H:S$.

For the symmetric part, set $G=Q+\Id/3$.  The $\xi$-dependent part of $S$ is
\[
S_\xi=\xi DG+\xi GD-2\xi G\tr(Q\nabla u).
\]
The $\xi$-dependent part of $\tau$ is
\[
\tau_\xi=-\xi GH-\xi HG+2\xi G\tr(QH).
\]
Since $D$ is symmetric and $\tr(Q\nabla u)=\tr(QD)$ because $Q$ is symmetric and $\Omega$ is skew,
\begin{align*}
\tau_\xi:\nabla u&=\tau_\xi:D\\
&=-\xi(GH+HG):D+2\xi(G:D)\tr(QH),
\end{align*}
whereas
\begin{align*}
H:S_\xi&=\xi H:(DG+GD)-2\xi(H:G)\tr(QD)\\
&=\xi(GH+HG):D-2\xi\tr(GH)\tr(QD).
\end{align*}
Because $G:D=Q:D+\tr D/3=Q:D$ and $H:G=H:Q+\tr H/3=H:Q$, since $\tr D=\diver u=0$ and $\tr H=0$, the two expressions cancel.  This proves \eqref{eq:cancel}.
\end{proof}

\section{The hyperviscous approximation}

Let $Q_0^\eps\in C_c^\infty(\R^3;\Szero)$ and $u_0^\eps\in C_{c,\sigma}^\infty(\R^3)$ be such that
\begin{equation}\label{eq:initapprox}
Q_0^\eps\to Q_0\quad\text{in }H^1(\R^3),\qquad
u_0^\eps\to u_0\quad\text{in }L^2(\R^3).
\end{equation}
For each $\eps>0$ consider
\begin{equation}\label{eq:epsQ}
\partial_t Q^\eps+u^\eps\cdot\nabla Q^\eps-S(\nabla u^\eps,Q^\eps)=\Gamma H^\eps,
\end{equation}
\begin{equation}\label{eq:epsu}
\partial_t u^\eps+u^\eps\cdot\nabla u^\eps=
\mu\Delta u^\eps-\eps\Delta^2u^\eps-\nabla p^\eps+\diver(\tau^\eps+\sigma^\eps),
\qquad \diver u^\eps=0,
\end{equation}
with $H^\eps,\tau^\eps,\sigma^\eps$ defined from $(Q^\eps,u^\eps)$ by \eqref{eq:H}, \eqref{eq:tau}, \eqref{eq:sigma}.

\begin{proposition}[Smooth approximate solutions]\label{prop:smoothapprox}
For each $\eps>0$ and $T>0$, system \eqref{eq:epsQ}--\eqref{eq:epsu} has a smooth solution on $[0,T]\times\R^3$ with initial data $(Q_0^\eps,u_0^\eps)$.  Moreover, the solution preserves the constraints $Q^\eps\in\Szero$ and $\diver u^\eps=0$.
\end{proposition}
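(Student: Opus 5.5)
The plan is to construct the solution by a local existence result followed by a continuation argument driven by the energy identity. The hyperviscosity $\eps\Delta^2 u^\eps$ is what makes this work in three dimensions.

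\emph{Step 1: local smooth solutions.} I would treat \eqref{eq:epsQ}--\eqref{eq:epsu} as a semilinear parabolic system for $(Q^\eps,u^\eps)$ in $H^s(\R^3)$, $s$ large. The principal parts are $\Gamma L\Delta Q$ and $-\eps\Delta^2 u+\mu\Delta u$, composed with the Leray projector in the $u$-equation. The top-order couplings are:
\begin{itemize}
\item $S(\nabla u,Q)$, which is first order in $u$;
\item $\diver(\tau+\sigma)$, which is third order in $Q$ through $\nabla\Delta Q$.
\end{itemize}
Both are of lower order relative to the respective smoothing. For the $\diver\tau$ term this holds if one uses parabolic smoothing in the $Q$-equation, or equivalently works in mixed regularity $Q\in H^{s+1}$, $u\in H^{s}$. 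A Banach fixed point for the Duhamel formulation on a short interval $[0,T_*]$ then gives a unique mild solution, and parabolic bootstrapping makes it smooth on $(0,T_*]$.

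Preservation of $\tr Q=0$ and $Q^T=Q$ follows by applying trace and transpose to \eqref{eq:epsQ}. One checks that $\tr S=2\xi\tr(QD)-2\xi\tr(Q\nabla u)=0$ and $\tr H=0$. Therefore $\tr Q$ solves $\partial_t q+u\cdot\nabla q=\Gamma L\Delta q-\Gamma a q$ with zero data, so $q\equiv0$. Symmetry is handled the same way: $S$ and $H$ are symmetric whenever $Q$ is. Alternatively, one can simply do the fixed point in the closed subspace of $\Szero$-valued maps, since the right-hand side maps it to itself. Divergence-freeness is built in by the Leray projection.

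\emph{Step 2: a priori energy bound.} Test \eqref{eq:epsu} with $u^\eps$ and \eqref{eq:epsQ} with $-H^\eps$. Lemma~\ref{lem:cancellation} shows that the stress and stretching terms cancel. The transport term $u\cdot\nabla Q:H$ pairs with the Ericksen part $-L\nabla Q\odot\nabla Q$ and integrates to zero, using $\diver u=0$. This gives the approximate version of \eqref{eq:physineq}, with the extra dissipation $\eps\|\Delta u^\eps\|^2$, as an exact identity. Combined with \eqref{eq:bulkcoercive} and an $L^2$ estimate for $Q$ (test with $Q$ and use Gronwall to absorb $-C|Q|^2$), it yields bounds uniform on $[0,T_*]$:
\begin{itemize}
\item $Q^\eps\in L^\infty H^1\cap L^\infty L^4$, with $H^\eps\in L^2L^2$;
\item $u^\eps\in L^\infty L^2\cap L^2H^2$, the $H^2$ bound coming from $\eps$.
\end{itemize}
From $H\in L^2L^2$ and the $L^\infty H^1$ bound one also gets $\Delta Q\in L^2L^2$, since $|Q|^3\in L^2$ follows from $Q\in H^1\subset L^6$.

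\emph{Step 3: continuation, the main obstacle.} The $L^2H^2$ control of $u$ gives $\nabla u\in L^2L^\infty$ up to a logarithmic correction, and in fact $\nabla u\in L^2 W^{1,2}\subset L^2L^6$. I would then derive higher-order estimates: $H^2$ for $Q$, then $H^k$ for all $k$, with Gronwall-type bounds depending only on $\eps$, $T$ and the energy. The hardest term is the coupling in the $Q$-equation $\xi(DQ+QD)-2\xi Q\tr(Q\nabla u)$ together with $\diver\tau$ in the $u$-equation at higher order. Here I would first estimate $\|\nabla Q\|_{H^1}$ by testing with $\Delta^2 Q$, controlling $\nabla u\cdot Q$ products via $\|\nabla u\|_{L^\infty}\lesssim\|u\|_{H^{2}}^{1/2}\|u\|_{H^{3}}^{1/2}$. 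I would absorb the $\|u\|_{H^3}$ factor into the $\eps\|\nabla\Delta u\|^2$ dissipation obtained by testing the $u$-equation with $-\Delta u$. The cubic nonlinearity $cQ|Q|^2$ is harmless since $Q\in L^\infty H^1$ and $\eps$-smoothing controls the rest.

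Because the resulting bounds are finite on any $[0,T]$, the standard blow-up alternative for the local solution excludes breakdown. This gives the smooth global solution on $[0,T]\times\R^3$.
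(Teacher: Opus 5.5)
Your proof is correct in outline but takes a different route from the paper. The paper constructs the approximation by a Fourier--Galerkin scheme on large tori $\mathbb T_R^3$. It uses the energy identity of Proposition~\ref{prop:globalenergy} to make the Galerkin ODEs global. It then passes to the limit in the Galerkin dimension and in $R\to\infty$, citing parabolic bootstrapping and local parabolic compactness for smoothness. The constraints are kept because the right-hand side maps $\Szero$-valued fields to $\Szero$-valued fields. You instead prove local well-posedness directly on $\R^3$ in mixed regularity and continue by a blow-up alternative.

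The real difference is where smoothness comes from. The basic energy only globalizes the Galerkin ODE and by itself yields a weak limit. So the paper's bootstrapping implicitly needs an $\eps$-dependent higher-order bound, uniform in the Galerkin dimension and in $R$, which is exactly your Step 3. That step does close:
\begin{itemize}
\item Testing the $u$-equation with $-\Delta u$ turns the stress into $\int(\tau+\sigma):\nabla\Delta u$.
\item This is absorbed by $\eps\|\nabla\Delta u\|_{L^2}^2$ at the cost of $\|\tau+\sigma\|_{L^2}^2\lesssim(1+\|Q\|_{L^\infty}^4)\|H\|_{L^2}^2+\|\nabla Q\|_{L^4}^4$.
\item By Agmon's inequality and $\|\nabla Q\|_{L^4}^4\lesssim\|\nabla Q\|_{L^2}\|\Delta Q\|_{L^2}^3$, every term in the joint estimate for $\|\Delta Q\|_{L^2}^2+\|\nabla u\|_{L^2}^2$ is at most linear in that quantity.
\item The coefficients, such as $\|H\|_{L^2}^2$, $\|\Delta Q\|_{L^2}^2$ and $\|u\|_{H^2}^2$, are already in $L^1_t$ by the energy identity, and higher norms then follow by bootstrapping.
\end{itemize}
Your route gives uniqueness of the approximate solution and avoids both limit passages. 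The paper's route gains brevity by appealing to standard Galerkin machinery, but leaves the higher-order estimate unstated.

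There are two minor slips. First, in three dimensions $\nabla u\in L^2H^1$ is not in $L^2L^\infty$ up to a logarithm; you only use $L^2L^6$, which is correct. Second, if you derive the equation for $q=\tr Q$ without already assuming $\tr Q=0$ and $Q^T=Q$, extra terms $-2\xi q\tr(QD)-\Gamma c\,q|Q|^2$ appear on the right. The equation stays linear and homogeneous in $q$, so $q\equiv0$ still follows. Your alternative of running the fixed point in the closed subspace of $\Szero$-valued maps avoids the issue entirely.
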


\begin{proof}
One may first solve the system on large tori $\mathbb T_R^3$ by a Fourier Galerkin approximation, keeping the full expressions $S,\tau,\sigma$ unaltered.  The equation for $u^\eps$ contains the fourth-order dissipative term $-\eps\Delta^2u^\eps$, while the $Q^\eps$-equation is a second-order parabolic system with polynomial nonlinearities.  Local smooth solutions follow from standard finite-dimensional ODE theory and parabolic bootstrapping.  The global energy estimates in Proposition \ref{prop:globalenergy} below rule out finite-time blow-up of the smooth Galerkin approximations.  Passing the Galerkin dimension to infinity gives a smooth solution on $\mathbb T_R^3$; passing $R\to\infty$ by local parabolic compactness gives a smooth solution on $\R^3$ on every finite time interval.  The symmetry and tracelessness of $Q^\eps$ are preserved because the right-hand side of \eqref{eq:epsQ} is symmetric and traceless whenever $Q^\eps$ is.  The divergence-free condition is enforced by the pressure or, equivalently, by applying the Leray projection to the velocity equation.
\end{proof}

\begin{proposition}[Global physical energy identity]\label{prop:globalenergy}
For every $t\in[0,T]$,
\begin{align}
&\frac12\norm{u^\eps(t)}{L^2}^2+\calF(Q^\eps(t))
+\int_0^t\left(\mu\norm{\nabla u^\eps}{L^2}^2+
\eps\norm{\Delta u^\eps}{L^2}^2+
\Gamma\norm{H^\eps}{L^2}^2\right)\dd s\notag\\
&\qquad=\frac12\norm{u_0^\eps}{L^2}^2+\calF(Q_0^\eps).
\label{eq:epsphys}
\end{align}
\end{proposition}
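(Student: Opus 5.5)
The plan is the standard energy computation: test the momentum equation \eqref{eq:epsu} with $u^\eps$, test the $Q$-equation \eqref{eq:epsQ} with $-H^\eps$, add the two, and use Lemma \ref{lem:cancellation} to remove the coupling terms. Since the solution from Proposition \ref{prop:smoothapprox} is smooth, the only real issue is justifying the spatial integrations by parts on $\R^3$.

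\textbf{Step 1 (velocity).} Multiplying \eqref{eq:epsu} by $u^\eps$ and integrating, the convection and pressure terms vanish because $\diver u^\eps=0$. This gives
\[
\frac12\frac{d}{dt}\norm{u^\eps}{L^2}^2+\mu\norm{\nabla u^\eps}{L^2}^2+\eps\norm{\Delta u^\eps}{L^2}^2=-\int(\tau^\eps+\sigma^\eps):\nabla u^\eps\dd x .
\]

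\textbf{Step 2 (order parameter).} Since $H$ is minus the variational derivative of $\calF$ on traceless symmetric tensors, $\frac{d}{dt}\calF(Q^\eps)=-\int H^\eps:\partial_tQ^\eps\dd x$. Here the term $b(\tr Q^2/3)\Id$ is orthogonal to traceless $\partial_tQ^\eps$. Substituting \eqref{eq:epsQ},
\[
\frac{d}{dt}\calF(Q^\eps)+\Gamma\norm{H^\eps}{L^2}^2=\int H^\eps:(u^\eps\cdot\nabla Q^\eps)\dd x-\int H^\eps:S(\nabla u^\eps,Q^\eps)\dd x.
\]

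\textbf{Step 3 (transport term).} The bulk part of $H^\eps:(u^\eps\cdot\nabla Q^\eps)$ equals $-u^\eps\cdot\nabla f(Q^\eps)$, where $f$ is the bulk energy density. This integrates to zero since $\diver u^\eps=0$. For the elastic part, integrate by parts twice:
\[
L\int \Delta Q^\eps:(u^\eps\cdot\nabla Q^\eps)\dd x=-L\int \partial_jQ^\eps_{\alpha\beta}\,\partial_j u^\eps_i\,\partial_iQ^\eps_{\alpha\beta}\dd x-\frac L2\int u^\eps\cdot\nabla|\nabla Q^\eps|^2\dd x .
\]
The last integral vanishes. The first equals $-L\int(\nabla Q^\eps\odot\nabla Q^\eps):\nabla u^\eps$, with the index convention matching the paper's definition of $\odot$; this index check has to be done carefully.

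\textbf{Step 4 (cancellation and integration).} Adding Steps 1–3, the right-hand side is
\[
-\int\Big[(\tau^\eps+\sigma^\eps):\nabla u^\eps+H^\eps:S+L(\nabla Q^\eps\odot\nabla Q^\eps):\nabla u^\eps\Big]\dd x,
\]
which is zero pointwise by \eqref{eq:cancel}. Integrating in time from $0$ to $t$ gives \eqref{eq:epsphys}.

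\textbf{Main obstacle: decay at infinity.} All the integrations by parts require boundary terms to vanish, and all integrals must be finite. Smoothness alone does not give this on $\R^3$, so I would use the torus construction in Proposition \ref{prop:smoothapprox}.
\begin{itemize}
\item On $\mathbb T_R^3$ there are no boundary terms, so the computation above is exact. This is also where the identity is used to get uniform bounds.
\item The bounds are uniform in $R$: with \eqref{eq:bulkcoercive} and Grönwall, the $L^2$ norm of $Q$ is controlled. The $\eps\Delta^2$ term then gives $u^\eps\in L^2_tH^2$, which together with the $H$ bound yields $Q^\eps\in L^2_tH^2$ uniformly.
\item These bounds ensure that on $\R^3$ every term belongs to $L^1_{t,x}$.
\end{itemize}
I would then verify the identity directly on $\R^3$ in one of two ways. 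The first is to repeat Steps 1–4 with a cutoff $\phi(x/R)$ and let $R\to\infty$; the commutator terms are bounded by $R^{-1}$ times integrable quantities, provided the higher derivatives are integrable, which comes from parabolic bootstrapping of the smooth solution. The second is to pass to the limit in the torus identities using the strong local convergence.

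The delicate point in both routes is the cubic pressure and stress terms, such as $|Q|^4|\nabla u|$. I expect the extra $\eps$-regularity of $u^\eps$ (namely $\nabla u^\eps\in L^2_tH^1\subset L^2_tL^6$) to give enough integrability to handle them.
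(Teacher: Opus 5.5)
Your proposal is correct and follows the paper's proof: test \eqref{eq:epsu} with $u^\eps$, use $\frac{d}{dt}\calF(Q^\eps)=-\int H^\eps:\partial_tQ^\eps\dd x$, rewrite the transport term as $-L\int(\nabla Q^\eps\odot\nabla Q^\eps):\nabla u^\eps\dd x$, and close with Lemma \ref{lem:cancellation}. The paper does this computation only formally and never addresses decay at infinity, so your cutoff route (essentially the localized computation of Lemma \ref{lem:localphys}, fed with the $R$-uniform torus bounds) adds rigor without changing the argument; your second option would not work as stated, since passing to the limit from the tori with only local strong convergence gives ``$\le$'' by lower semicontinuity, not the identity.
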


\begin{proof}
Multiply \eqref{eq:epsu} by $u^\eps$ and integrate.  The transport and pressure terms vanish by $\diver u^\eps=0$, while
\[
\int u^\eps\cdot\mu\Delta u^\eps=-\mu\norm{\nabla u^\eps}{L^2}^2,
\qquad
\int u^\eps\cdot(-\eps\Delta^2u^\eps)=-\eps\norm{\Delta u^\eps}{L^2}^2.
\]
The stress term gives
\[
\int u^\eps\cdot\diver(\tau^\eps+\sigma^\eps)
=-(\tau^\eps+
\sigma^\eps):\nabla u^\eps.
\]
Next, since $H=-\delta\calF/\delta Q$ in the traceless class,
\[
\frac{d}{dt}\calF(Q^\eps)=-\int H^\eps:\partial_tQ^\eps.
\]
Using \eqref{eq:epsQ},
\[
-\int H^\eps:\partial_tQ^\eps
=\int H^\eps:u^\eps\cdot\nabla Q^\eps-
\int H^\eps:S(\nabla u^\eps,Q^\eps)-\Gamma\norm{H^\eps}{L^2}^2.
\]
The transport contribution satisfies
\[
\int H^\eps:u^\eps\cdot\nabla Q^\eps
=-L(\nabla Q^\eps\odot\nabla Q^\eps):\nabla u^\eps,
\]
where all lower-order bulk terms are transported by the divergence-free velocity and hence integrate to zero.  Adding the two identities and using Lemma \ref{lem:cancellation} yields \eqref{eq:epsphys}.
\end{proof}

\begin{lemma}[Uniform natural bounds]\label{lem:naturalbounds}
For every $T>0$ there is $C_T$, independent of $\eps$, such that
\begin{align}
&\norm{u^\eps}{L^\infty(0,T;L^2)}+\norm{u^\eps}{L^2(0,T;H^1)}\le C_T,\label{eq:ubounds}\\
&\sqrt\eps\,\norm{\Delta u^\eps}{L^2(0,T;L^2)}\le C_T,\label{eq:hyperbound}\\
&\norm{Q^\eps}{L^\infty(0,T;H^1)}+\norm{Q^\eps}{L^2(0,T;H^2)}\le C_T,\label{eq:Qbounds}\\
&\norm{H^\eps}{L^2(0,T;L^2)}\le C_T.\label{eq:Hbound}
\end{align}
Consequently,
\begin{equation}\label{eq:Qextra}
Q^\eps\text{ is bounded in }L^{10}((0,T)\times\R^3),\qquad
\nabla Q^\eps\text{ is bounded in }L^4(0,T;L^3).
\end{equation}
\end{lemma}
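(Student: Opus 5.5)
The plan is to read off all bounds from the energy identity of Proposition \ref{prop:globalenergy}, using the coercivity \eqref{eq:bulkcoercive} to handle the indefinite sign of the bulk potential, and then recover the $H^2$ bound from the $L^2$ bound on $H^\eps$. The right-hand side of \eqref{eq:epsphys} is bounded uniformly in $\eps$. This follows from \eqref{eq:initapprox} and the Sobolev embedding $H^1\subset L^3\cap L^4$, which controls $\int \tr(Q_0^\eps)^3$ and $\int |Q_0^\eps|^4$.

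\textbf{Step 1: $L^2$ control of $Q^\eps$.} By \eqref{eq:bulkcoercive}, \eqref{eq:epsphys} gives
\[
\tfrac12\|u^\eps(t)\|_{L^2}^2+\tfrac L2\|\nabla Q^\eps(t)\|_{L^2}^2+c_0\|Q^\eps(t)\|_{L^4}^4+\int_0^t(\ldots)\,ds\le C_0+C_{a,b,c}\|Q^\eps(t)\|_{L^2}^2 .
\]
So the only missing ingredient is an $L^2$ bound on $Q^\eps$. To get it, I will take the $L^2$ inner product of \eqref{eq:epsQ} with $Q^\eps$. The transport term vanishes since $\diver u^\eps=0$. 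The term $\Gamma(H^\eps,Q^\eps)$ contributes
\[
-\Gamma L\|\nabla Q^\eps\|^2-\Gamma a\|Q^\eps\|^2+\Gamma b\int\tr (Q^\eps)^3-\Gamma c\|Q^\eps\|_{L^4}^4 .
\]
The $\tr Q^3$ term is absorbed by the quartic term plus $C\|Q^\eps\|^2$. The stretching term is the delicate one. Its corotational part $\Omega Q-Q\Omega$ is orthogonal to $Q$. The $\xi$-part is bounded by
\[
C|\xi|\int |\nabla u^\eps|\,(|Q^\eps|+|Q^\eps|^2+|Q^\eps|^3)
\le \delta\|\nabla u^\eps\|^2+C_\delta\bigl(\|Q^\eps\|^2+\|Q^\eps\|_{L^4}^4+\|Q^\eps\|_{L^6}^6\bigr).
\]
I expect this $L^6$ term to be the main obstacle. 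My first approach is to avoid testing with $Q$ in isolation. Instead, I will estimate the $L^2$ norm along the energy through a combined Gronwall argument. Write
\[
E(t)=\tfrac12\|u^\eps\|^2+\calF(Q^\eps)+K\|Q^\eps\|^2
\]
with $K$ large, so that $E\gtrsim \|u\|^2+\|Q\|_{H^1}^2+\|Q\|_{L^4}^4$. The term $\int_0^t\|\nabla u\|^2$ from \eqref{eq:epsphys} absorbs $\delta\|\nabla u\|^2$. The $L^6$ term I will control via $\|\Delta Q\|$ in the next step. As a fallback, $|Q|^3|\nabla u|$ is handled by $\|Q\|_{L^6}^3\|\nabla u\|_{L^2}\le C\|Q\|_{H^1}^3\|\nabla u\|$. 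This gives
\[
\frac{d}{dt}\|Q\|^2\le C\|\nabla u\|^2+C E^3 ,
\]
which is only a local bound. Hence the coupled argument through Step 2 is essential.

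\textbf{Step 2: $H^2$ bound from $H^\eps$.} We have $L\Delta Q^\eps=H^\eps+aQ^\eps-\Bbulk(Q^\eps)$. The quantity $\|\Bbulk(Q)\|_{L^2}$ contains $\|Q\|_{L^6}^3\le C\|Q\|_{H^1}^3$, which is not integrable in time without more structure. Instead, I will test $H^\eps$ against $-\Delta Q^\eps$:
\[
(-\Delta Q,H)=L\|\Delta Q\|^2+a\|\nabla Q\|^2+c\int\bigl(|Q|^2|\nabla Q|^2+2|Q:\nabla Q|^2\bigr)-b(\ldots).
\]
Here the positive quartic gradient term coming from $c>0$ absorbs the cubic $b$-term. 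This yields
\[
L\|\Delta Q\|^2\le C\|H\|^2+C\|\nabla Q\|^2 ,
\]
which is integrable by \eqref{eq:Hbound}. It also gives $\int_0^T\|Q\|_{L^\infty}^2\le C$ and $\|Q\|_{L^6}^6\le \|Q\|_{L^\infty}^2\|Q\|_{L^4}^4$. Feeding this back into Step 1 gives
\[
\frac{d}{dt}\|Q\|^2\le C\|\nabla u\|^2+C(1+\|Q\|_{L^\infty}^2)E .
\]
Combined with \eqref{eq:epsphys}, Gronwall closes, and this gives $C_T$ for \eqref{eq:ubounds}--\eqref{eq:Hbound}.

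\textbf{Step 3: the consequences \eqref{eq:Qextra}.} These follow by interpolation. Gagliardo--Nirenberg gives $\|Q\|_{L^{10}}\le C\|Q\|_{H^1}^{4/5}\|Q\|_{H^2}^{1/5}$ on $\R^3$, since $\|f\|_{L^{10}}\lesssim\|f\|_{L^6}^{3/5}\|f\|_{L^\infty}^{2/5}$ and $\|f\|_{L^\infty}\lesssim\|f\|_{H^1}^{1/2}\|f\|_{H^2}^{1/2}$. Hence $\|Q\|_{L^{10}}^{10}\lesssim \|Q\|_{H^1}^{8}\|Q\|_{H^2}^2\in L^1_t$. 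Similarly $\|\nabla Q\|_{L^3}\le\|\nabla Q\|_{L^2}^{1/2}\|\nabla Q\|_{L^6}^{1/2}\lesssim\|Q\|_{H^1}^{1/2}\|Q\|_{H^2}^{1/2}$, so its fourth power lies in $L^1_t$.
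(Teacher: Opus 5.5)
There is a genuine gap at the end of your Step 2, where you conclude ``Gronwall closes.'' Every bound in the lemma rests on the uniform $L^\infty_tL^2_x$ bound for $Q^\eps$, so the whole lemma is affected. Your $\Delta Q$-estimate itself is correct, up to a sign: it is $(\Delta Q^\eps,H^\eps)$, not $(-\Delta Q^\eps,H^\eps)$, that equals $L\|\Delta Q^\eps\|^2+a\|\nabla Q^\eps\|^2+c\int(\dots)+b(\Delta Q^\eps,(Q^\eps)^2)$. The problem is the Gronwall step. It needs the time integral of the coefficient $1+\|Q^\eps\|_{L^\infty}^2$ to be bounded independently of the unknown. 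Your bound on it comes from $\|Q\|_{L^\infty}^2\lesssim\|\nabla Q\|_{L^2}\|\Delta Q\|_{L^2}$ and $\|\Delta Q\|^2\lesssim\|H\|^2+\|\nabla Q\|^2$. But \eqref{eq:epsphys} controls $\sup_t\|\nabla Q^\eps\|^2$, $\int_0^T\|H^\eps\|^2$ and $\sup_tE$ only by $C_0+C\sup_t\|Q^\eps\|_{L^2}^2$, which is exactly what you are trying to bound. Set $M=1+\sup_{t\le T}\|Q^\eps(t)\|_{L^2}^2$. Then $\int_0^T\|Q^\eps\|_{L^\infty}^2E\,dt\lesssim M\cdot M^{1/2}\cdot C_TM^{1/2}=C_TM^2$, so integrating your inequality gives only $M\le C+C_TM^2$. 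That yields a bound for short times or small data, not on an arbitrary $[0,T]$: Step 2 lowers your fallback $E^3$ to $M^2$ but does not make it linear.

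The paper's proof does not supply this step either. It tests with $Q^\eps$ as in your Step 1 and asserts $\frac{d}{dt}\|Q^\eps\|^2\le C_T(1+\|\nabla u^\eps\|^2)(1+\|Q^\eps\|^2)$, treating $\nabla u^\eps\in L^2L^2$ and $\nabla Q^\eps\in L^\infty L^2$ as already controlled. By its own preceding sentence, those are controlled only modulo $\|Q^\eps\|_{L^\infty L^2}$, and the term $|\nabla u^\eps||Q^\eps|^3$ produces the same superlinear growth there. Separately, your remark that $\Bbulk(Q^\eps)$ is not time-integrable is off: once the $H^1$ bound is known it lies in $L^\infty_tL^2_x$, which is how the paper gets the $H^2$ bound. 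Your $\Delta Q$-test is still a valid alternative route to that bound.

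The missing idea is to use the $Q$-equation only for the small-amplitude part of $\|Q\|_{L^2}^2$. The energy already sees the large-amplitude part through the quartic term, and where $|Q|$ is small the stretching term is linear.
\begin{itemize}
\item Take $\phi\in C^\infty([0,\infty))$ nonincreasing, with $\phi=1$ on $[0,1]$ and $\phi=0$ on $[2,\infty)$. Set $\Phi(s)=\int_0^s r\,\phi(r)\,dr$ and $W(t)=\int\Phi(|Q^\eps|)\,dx$, and test \eqref{eq:epsQ} with $\phi(|Q^\eps|)Q^\eps$.
\item The transport and corotational terms vanish as before.
\item The $\xi$-terms are supported in $\{|Q^\eps|\le2\}$. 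Since $\Phi\ge\frac12$ on $\{|Q|\ge1\}$, they are bounded by $\delta\|\nabla u^\eps\|^2+C_\delta\int_{\{|Q^\eps|\le2\}}|Q^\eps|^2\le\delta\|\nabla u^\eps\|^2+8C_\delta W$.
\item The bulk terms contribute at most $CW$.
\item The Laplacian contributes at most $C\|\nabla Q^\eps\|^2$, because $s|\phi'(s)|$ is bounded.
\item For $K$ large, $\frac a2|Q|^2-\frac b3\tr(Q^3)+\frac c4|Q|^4+K\Phi(|Q|)\ge c_0(|Q|^2+|Q|^4)$. Hence \eqref{eq:epsphys} bounds $\|u^\eps\|^2+\|Q^\eps\|_{H^1}^2+\|Q^\eps\|_{L^4}^4+\int_0^t(\mu\|\nabla u^\eps\|^2+\Gamma\|H^\eps\|^2)\,ds$ linearly by $E_0+KW(t)$, where $E_0=\frac12\|u_0^\eps\|^2+\calF(Q_0^\eps)$.
\item Choosing $\delta=\mu/(2K)$ gives $W(t)\le 2W(0)+C+C\int_0^t(1+W)\,ds$, a genuinely linear Gronwall inequality.
\end{itemize}
After that, the $\Delta Q$-estimate of your Step 2 and the interpolation of your Step 3, which is correct, give the remaining bounds.
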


\begin{proof}
The bounds for $u^\eps$, $\sqrt\eps\Delta u^\eps$, $H^\eps$, and $\nabla Q^\eps$ follow from \eqref{eq:epsphys}, the lower bound \eqref{eq:bulkcoercive}, and the initial convergence \eqref{eq:initapprox}, modulo a bound for $\norm{Q^\eps}{L^\infty_tL^2_x}$.  To obtain this bound, multiply \eqref{eq:epsQ} by $Q^\eps$ and integrate.  The transport term vanishes and
\begin{align*}
\frac12\frac{d}{dt}\norm{Q^\eps}{L^2}^2
&+\Gamma L\norm{\nabla Q^\eps}{L^2}^2+
\Gamma c\norm{Q^\eps}{L^4}^4\\
&\le C\norm{Q^\eps}{L^2}^2+C\int |\nabla u^\eps|\bigl(|Q^\eps|+|Q^\eps|^2+|Q^\eps|^3\bigr)\dd x.
\end{align*}
Using Young's inequality, Sobolev embedding and the Gagliardo--Nirenberg inequality \cite{Gagliardo1958,Nirenberg1959}, together with the already controlled quantities $\nabla u^\eps\in L^2_tL^2_x$ and $\nabla Q^\eps\in L^\infty_tL^2_x$, we obtain
\[
\frac{d}{dt}\norm{Q^\eps}{L^2}^2
\le C_T\left(1+\norm{\nabla u^\eps}{L^2}^2\right)\left(1+\norm{Q^\eps}{L^2}^2\right)
\]

a.e. in $t$.  Gronwall gives $Q^\eps\in L^\infty_tL^2_x$ uniformly.

Finally,
\[
L\Delta Q^\eps=H^\eps+aQ^\eps-\Bbulk(Q^\eps).
\]
Since $Q^\eps\in L^\infty_tH^1_x\hookrightarrow L^\infty_tL^6_x$, we have $\Bbulk(Q^\eps)\in L^\infty_tL^2_x\subset L^2_tL^2_x$ on finite time intervals.  This yields $\Delta Q^\eps\in L^2_tL^2_x$.  The interpolation estimates in \eqref{eq:Qextra} follow from
\[
\norm{Q^\eps}{L^{10}_x}\lesssim
\norm{\Delta Q^\eps}{L^2_x}^{1/5}\norm{Q^\eps}{L^6_x}^{4/5},
\]
and
\[
\norm{\nabla Q^\eps}{L^3_x}\lesssim
\norm{\nabla Q^\eps}{L^2_x}^{1/2}\norm{\nabla Q^\eps}{L^6_x}^{1/2}.
\]
\end{proof}

\section{Localized physical energy and tail tightness}

Let $\eta_R\in C^\infty(\R^3)$ satisfy
\begin{equation}\label{eq:etaR}
0\le\eta_R\le1,
\quad
\eta_R=0\text{ on }B_R,
\quad
\eta_R=1\text{ on }\R^3\setminus B_{2R},
\quad
|\nabla\eta_R|\le C/R,
\quad
|\Delta\eta_R|\le C/R^2.
\end{equation}
Let $A_R=\{x:R<|x|<2R\}$.

\subsection{Local energy balance}

Set
\[
f(Q,\nabla Q)=\frac L2|\nabla Q|^2+\frac a2|Q|^2-\frac b3\tr(Q^3)+\frac c4|Q|^4,
\qquad
e^\eps=\frac12|u^\eps|^2+f(Q^\eps,\nabla Q^\eps).
\]

\begin{lemma}[Localized physical energy identity]\label{lem:localphys}
For every $\eta\in C_c^\infty(\R^3)$,
\begin{align}
&\frac{d}{dt}\int \eta e^\eps\dd x
+\int\eta\left(\mu|\nabla u^\eps|^2+\Gamma|H^\eps|^2+\eps|\Delta u^\eps|^2\right)\dd x\notag\\
&\qquad= -\int \calJ^\eps\cdot\nabla\eta\dd x+\calE^\eps_{\rm hyp}[\eta],
\label{eq:localphys}
\end{align}
where
\begin{align}
\calJ^\eps
&=-u^\eps e^\eps-p^\eps u^\eps+\mu(\nabla u^\eps)u^\eps+(\tau^\eps+\sigma^\eps)u^\eps\notag\\
&\qquad+L\nabla Q^\eps:\left(S(\nabla u^\eps,Q^\eps)+\Gamma H^\eps\right),
\label{eq:Jdef}
\end{align}
and
\begin{align}
|\calE^\eps_{\rm hyp}[\eta]|
&\le \frac{\eps}{2}\int \eta|\Delta u^\eps|^2\dd x
+C\eps\int_{\supp\nabla\eta}\left(|\nabla\eta|^2|\nabla u^\eps|^2+|\Delta\eta|^2|u^\eps|^2\right)\dd x.
\label{eq:hypererror}
\end{align}
\end{lemma}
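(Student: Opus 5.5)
The plan is to redo the computation of Proposition \ref{prop:globalenergy} with the weight $\eta$ inserted. Every integration by parts now produces a boundary-type term carrying $\nabla\eta$, and these are collected into $\calJ^\eps$. The one exception is the hyperviscous term, which is handled separately as $\calE^\eps_{\rm hyp}[\eta]$.

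\textbf{Momentum equation.} Take the dot product of \eqref{eq:epsu} with $\eta u^\eps$ and integrate; all manipulations are legitimate because the solution is smooth by Proposition \ref{prop:smoothapprox}.
\begin{itemize}
\item Using $\diver u^\eps=0$, the convective and pressure terms give $\int (\frac12|u^\eps|^2+p^\eps)u^\eps\cdot\nabla\eta$.
\item The viscous term gives $-\mu\int\eta|\nabla u^\eps|^2-\mu\int (\nabla u^\eps)u^\eps\cdot\nabla\eta$, with indices ordered so as to match \eqref{eq:Jdef}.
\item The stress term gives $-\int\eta(\tau^\eps+\sigma^\eps):\nabla u^\eps-\int(\tau^\eps+\sigma^\eps)u^\eps\cdot\nabla\eta$.
\end{itemize}

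\textbf{Order-parameter equation.} For the free-energy density, compute $\partial_t f=L\nabla Q^\eps:\nabla\partial_tQ^\eps+\partial_Q f_{\rm bulk}:\partial_tQ^\eps$. Integrating $\eta$ times this by parts gives
\[
\frac{d}{dt}\int\eta f=-\int\eta H^\eps:\partial_tQ^\eps-\int L\,\partial_kQ^\eps:\partial_tQ^\eps\,\partial_k\eta .
\]
Substitute $\partial_tQ^\eps=-u^\eps\cdot\nabla Q^\eps+S+\Gamma H^\eps$ from \eqref{eq:epsQ}.
\begin{itemize}
\item In the boundary term, the $S+\Gamma H^\eps$ part yields the flux $L\nabla Q^\eps:(S+\Gamma H^\eps)$ in \eqref{eq:Jdef}. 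The sign of this flux is to be checked against the convention $-\int\calJ\cdot\nabla\eta$.
\item The transport part is combined with $\int\eta H^\eps:(u^\eps\cdot\nabla Q^\eps)$. Write $H^\eps=L\Delta Q^\eps-\partial_Q f_{\rm bulk}$. The bulk part equals $-\int\eta\,u^\eps\cdot\nabla f_{\rm bulk}=\int f_{\rm bulk}u^\eps\cdot\nabla\eta$.
\item For the $L\Delta Q^\eps$ part, integrate by parts to obtain $-L\int\eta(\nabla Q^\eps\odot\nabla Q^\eps):\nabla u^\eps$. The pieces $-L\int\eta\,u^\eps\cdot\nabla\frac{|\nabla Q^\eps|^2}{2}$ and $-L\int\partial_kQ^\eps:(u^\eps\cdot\nabla Q^\eps)\partial_k\eta$ also appear. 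The first of these gives $\frac L2\int|\nabla Q^\eps|^2u^\eps\cdot\nabla\eta$. The second exactly cancels the transport contribution of the boundary term from the time derivative.
\end{itemize}
All fluxes proportional to $u^\eps$ then combine into $-u^\eps e^\eps$. The interior terms cancel pointwise by Lemma \ref{lem:cancellation}, multiplied by $\eta$, leaving $-\Gamma\int\eta|H^\eps|^2$.

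\textbf{Hyperviscous term.} Two integrations by parts give
\[
-\eps\int\eta u^\eps\cdot\Delta^2u^\eps=-\eps\int\Delta(\eta u^\eps)\cdot\Delta u^\eps .
\]
Expanding $\Delta(\eta u^\eps)=\eta\Delta u^\eps+2\nabla\eta\cdot\nabla u^\eps+u^\eps\Delta\eta$ yields $-\eps\int\eta|\Delta u^\eps|^2$. Define $\calE^\eps_{\rm hyp}[\eta]$ as the remaining cross terms $-\eps\int(2\nabla\eta\cdot\nabla u^\eps+u^\eps\Delta\eta)\cdot\Delta u^\eps$.

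Note that \eqref{eq:hypererror} bounds the cross terms by $\frac\eps2\int\eta|\Delta u^\eps|^2$, while the weights $|\nabla\eta|^2$ and $|\Delta\eta|^2$ appear without division by $\eta$. A naive Young inequality would instead produce $|\nabla\eta|^2/\eta$. This mismatch is the main obstacle. I would handle it in one of two ways.
\begin{itemize}
\item Apply Young with $\Delta u^\eps$ weighted by $\mathbbm 1_{\supp\nabla\eta}$, giving a bound $\frac\eps2\int_{\supp\nabla\eta}|\Delta u^\eps|^2$.
\item Restrict to cutoffs for which $|\nabla\eta|^2\lesssim\eta$, such as squares $\eta=\zeta^2$, which is what the tail argument with $\eta_R$ uses.
\end{itemize}
The resulting term $\frac\eps2\int\eta|\Delta u^\eps|^2$ is half of the dissipation, so it can be absorbed. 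Alternatively, the remaining $\eps$-terms are harmless after time integration by \eqref{eq:hyperbound}, since $\eps\|\Delta u^\eps\|^2_{L^2_{t,x}}\le C_T$.
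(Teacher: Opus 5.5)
Your derivation of the identity is the paper's computation. You do it in weighted integrated form rather than through the pointwise balances for $f$ and $\frac12|u^\eps|^2$. All fluxes, including the sign of $L\nabla Q^\eps:(S+\Gamma H^\eps)$ that you left to be checked, come out exactly as in \eqref{eq:Jdef}.

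The gap is the hyperviscous remainder. The paper only says ``by Cauchy's inequality'' there. You correctly note that Young's inequality applied to $\calE^\eps_{\rm hyp}[\eta]=-\eps\int(2\nabla\eta\cdot\nabla u^\eps+u^\eps\Delta\eta)\cdot\Delta u^\eps\dd x$ produces the weights $|\nabla\eta|^2/\eta$ and $|\Delta\eta|^2/\eta$, not those of \eqref{eq:hypererror}. Your repairs, however, do not close the gap.

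(a) Young with $\mathbbm 1_{\supp\nabla\eta}$ leaves $\frac\eps2\int_{\supp\nabla\eta}|\Delta u^\eps|^2$. Since $\eta$ vanishes on part of $\supp\nabla\eta$ (for $\eta_R$, near $|x|=R$), this term is not controlled by $\frac\eps2\int\eta|\Delta u^\eps|^2$, so it cannot be absorbed. Falling back on $\eps\norm{\Delta u^\eps}{L^2_{t,x}}^2\le C_T$ only shows the term is bounded. Lemma \ref{lem:flux} needs it to be $o_R(1)$ uniformly in $\eps$.

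(b) The criterion $|\nabla\eta|^2\lesssim\eta$ holds automatically for every smooth $\eta\ge0$, since $|\nabla\eta|^2\le2\norm{\nabla^2\eta}{L^\infty}\eta$. It handles only the $\nabla\eta$-commutator, and then with a constant weight rather than $|\nabla\eta|^2$. The commutator $u^\eps\Delta\eta\cdot\Delta u^\eps$ needs $|\Delta\eta|^2\lesssim\eta$, which fails, e.g.\ for $\eta=|x|^2$ near a point. That condition does hold for $\eta=\zeta^2$ with $\zeta\ge0$, but you do not check it, and the resulting weights are still not those of \eqref{eq:hypererror}.

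What works is one more integration by parts, which removes $\Delta u^\eps$ altogether. Use $\partial_ju\cdot\partial_{jk}u=\frac12\partial_k|\nabla u|^2$ and $u\cdot\partial_ju=\frac12\partial_j|u|^2$ to get
\[
\calE^\eps_{\rm hyp}[\eta]=2\eps\int\partial_{jk}\eta\,\partial_ku^\eps\cdot\partial_ju^\eps\dd x-\frac\eps2\int\Delta^2\eta\,|u^\eps|^2\dd x .
\]
Hence
\[
|\calE^\eps_{\rm hyp}[\eta]|\le C\eps\int_{\supp\nabla\eta}\bigl(|\nabla^2\eta||\nabla u^\eps|^2+|\Delta^2\eta||u^\eps|^2\bigr)\dd x,
\]
with nothing to absorb. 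For $\eta_R=\eta_1(\cdot/R)$ this is exactly the bound $C\eps\int_{A_R}(R^{-2}|\nabla u^\eps|^2+R^{-4}|u^\eps|^2)$ that Lemma \ref{lem:flux} actually uses. Alternatively, Cauchy--Schwarz in space-time with \eqref{eq:hyperbound} gives $\int_0^T|\calE^\eps_{\rm hyp}[\eta_R]|\dd t\le C_T\sqrt\eps\,(R^{-1}+R^{-2})$.

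The same formula shows that \eqref{eq:hypererror} cannot be proved as written. Take $\eta=|x|^2$ near $0$ and $u=U(x/r)$, with $U$ a divergence-free truncation of $e^{-|y|^2/2}(-y_2,y_1,0)$. Then the left side is $4\eps\int|\nabla u|^2\approx10\pi^{3/2}\eps r$. On the right, $\frac\eps2\int\eta|\Delta u|^2\approx\frac{115}{16}\pi^{3/2}\eps r$, and the two $C\eps$ terms are $O(r^3)$. So the lemma should be restated with the weights $|\nabla^2\eta|$ and $|\Delta^2\eta|$, which is all the tail argument needs, rather than proved verbatim.
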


\begin{proof}
A direct calculation gives
\[
\partial_t f+\diver(u^\eps f)
=-H^\eps:(\partial_tQ^\eps+u^\eps\cdot\nabla Q^\eps)
+L\diver\left(\nabla Q^\eps:(\partial_tQ^\eps+u^\eps\cdot\nabla Q^\eps)\right)
-L(\nabla Q^\eps\odot\nabla Q^\eps):\nabla u^\eps.
\]
Using $\partial_tQ^\eps+u^\eps\cdot\nabla Q^\eps=S+\Gamma H^\eps$ gives
\begin{align*}
\partial_t f+\diver(u^\eps f)+\Gamma|H^\eps|^2
&=-H^\eps:S(\nabla u^\eps,Q^\eps)-L(\nabla Q^\eps\odot\nabla Q^\eps):\nabla u^\eps\\
&\quad+L\diver\left(\nabla Q^\eps:(S(\nabla u^\eps,Q^\eps)+\Gamma H^\eps)\right).
\end{align*}
The velocity equation yields
\begin{align*}
\partial_t\frac{|u^\eps|^2}{2}+\diver\left(u^\eps\frac{|u^\eps|^2}{2}\right)
&=-\diver(p^\eps u^\eps)+\mu\diver((\nabla u^\eps)u^\eps)-\mu|\nabla u^\eps|^2\\
&\quad+\diver((\tau^\eps+\sigma^\eps)u^\eps)-(\tau^\eps+\sigma^\eps):\nabla u^\eps\\
&\quad-\eps u^\eps\cdot\Delta^2u^\eps.
\end{align*}
Adding these identities and using Lemma \ref{lem:cancellation} gives the local balance except for the hyperviscous term.  For the latter,
\[
\int \eta u^\eps\cdot\Delta^2u^\eps\dd x
=\int \Delta(\eta u^\eps)\cdot\Delta u^\eps\dd x
=\int \eta|\Delta u^\eps|^2\dd x+\text{commutators},
\]
and the commutators are bounded by \eqref{eq:hypererror} by Cauchy's inequality.  This proves \eqref{eq:localphys}.
\end{proof}

\subsection{A localized estimate for the tensor field}

\begin{lemma}[Localized $Q$ estimate]\label{lem:localQ}
For every $R\ge1$,
\begin{align}
&\frac12\frac{d}{dt}\int\eta_R|Q^\eps|^2\dd x
+\Gamma L\int\eta_R|\nabla Q^\eps|^2\dd x
+\Gamma c\int\eta_R|Q^\eps|^4\dd x\notag\\
&\qquad\le \int\eta_R Q^\eps:S(\nabla u^\eps,Q^\eps)\dd x
+C\int\eta_R(|Q^\eps|^2+|Q^\eps|^3)\dd x+\mathcal R_R^\eps(t),
\label{eq:localQ}
\end{align}
where
\begin{equation}\label{eq:RR}
|\mathcal R_R^\eps(t)|\le \frac{C}{R}\int_{A_R}\left(|u^\eps||Q^\eps|^2+|Q^\eps||\nabla Q^\eps|\right)\dd x.
\end{equation}
Moreover,
\begin{equation}\label{eq:QScontrol}
\int\eta_R |Q^\eps:S(\nabla u^\eps,Q^\eps)|\dd x
\le \delta\int\eta_R|\nabla u^\eps|^2\dd x+C_{T,\delta}Y_{R/2}^\eps(t),
\end{equation}
where $Y_R^\eps$ is defined in \eqref{eq:YR} below.
\end{lemma}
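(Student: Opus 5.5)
I will test \eqref{eq:epsQ} against $\eta_R Q^\eps$ and integrate over $\R^3$. This is legitimate because the solution from Proposition \ref{prop:smoothapprox} is smooth and has the decay given by Lemma \ref{lem:naturalbounds}. The time derivative gives $\frac12\frac{d}{dt}\int\eta_R|Q^\eps|^2$.

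\emph{Transport term.} Since $\diver u^\eps=0$,
\[
\int\eta_R Q^\eps:(u^\eps\cdot\nabla Q^\eps)=-\frac12\int|Q^\eps|^2u^\eps\cdot\nabla\eta_R ,
\]
which is bounded by $\frac CR\int_{A_R}|u^\eps||Q^\eps|^2$ and goes into $\mathcal R^\eps_R$.

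\emph{Stretching term.} The term $-S$ moves to the right-hand side and appears there as $\int\eta_R Q^\eps:S$.

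\emph{Molecular field.} Write $\Gamma H^\eps=\Gamma L\Delta Q^\eps-\Gamma aQ^\eps+\Gamma\Bbulk(Q^\eps)$.
\begin{itemize}
\item Integrating the Laplacian by parts gives $-\Gamma L\int\eta_R|\nabla Q^\eps|^2-\Gamma L\int Q^\eps:(\nabla\eta_R\cdot\nabla Q^\eps)$. The second piece is bounded by $\frac CR\int_{A_R}|Q^\eps||\nabla Q^\eps|$.
\item Tracelessness gives $\Bbulk(Q):Q=b\tr(Q^3)-c|Q|^4$. This produces the good term $-\Gamma c\int\eta_R|Q^\eps|^4$, while $|a||Q|^2+|b||Q|^3$ is absorbed into $C\int\eta_R(|Q^\eps|^2+|Q^\eps|^3)$.
\end{itemize}
This yields \eqref{eq:localQ} and \eqref{eq:RR}. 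This part is routine, since $\supp\nabla\eta_R\subset A_R$ and $|\nabla\eta_R|\le C/R$.

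\emph{Proof of \eqref{eq:QScontrol}.} Pointwise, $|Q:S(\nabla u,Q)|\le C|\nabla u|(|Q|+|Q|^2+|Q|^3)$. In fact $Q:(\Omega Q-Q\Omega)=0$ by symmetry, so only the $\xi$-terms contribute. Young's inequality then gives
\[
\eta_R|Q:S|\le\delta\eta_R|\nabla u|^2+C_\delta\eta_R(|Q|^2+|Q|^4+|Q|^6).
\]
The quantity $Y^\eps_{R/2}$ is defined only in \eqref{eq:YR}, after the statement. I expect it to be a tail functional of the form
\[
Y^\eps_{R}(t)=\int\eta_{R}\bigl(|u^\eps|^2+|Q^\eps|^2+|\nabla Q^\eps|^2+|Q^\eps|^4\bigr)\dd x,
\]
possibly together with the dissipation. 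The index $R/2$ suggests a cutoff with $\eta_{R/2}=1$ on $\supp\eta_R$, so $\eta_R\le\eta_{R/2}$. The $|Q|^2$ and $|Q|^4$ contributions are then directly bounded by $Y^\eps_{R/2}$.

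\emph{Main obstacle: the sextic term.} The term $\int\eta_R|Q^\eps|^6$ must be controlled by $C_{T,\delta}Y^\eps_{R/2}$, with a constant independent of $\eps$ and $R$. I would first set $w=\eta_{R/2}^{1/2}Q^\eps$ (or use $\eta_R\le\eta_{R/2}^3$ if the cutoffs are chosen as powers) and apply Sobolev on $\R^3$:
\[
\int\eta_R|Q^\eps|^6\le\norm{w}{L^6}^6\le C\norm{\nabla w}{L^2}^6 .
\]
Here $\norm{\nabla w}{L^2}^2\lesssim\int\eta_{R/2}|\nabla Q^\eps|^2+R^{-2}\int_{A_{R/2}}|Q^\eps|^2\lesssim Y^\eps_{R/2}$, so the whole term is at most $C(Y^\eps_{R/2})^3$. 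Lemma \ref{lem:naturalbounds} bounds $Y^\eps_{R/2}\le C_T$ uniformly in $\eps$ and $R$, hence $(Y^\eps_{R/2})^3\le C_T^2Y^\eps_{R/2}$. Combining the three contributions gives \eqref{eq:QScontrol}.

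If instead $Y$ contains only lower-order quantities, I would fall back on splitting $|\nabla u||Q|^3\le\delta|\nabla u|^2+C_\delta|Q|^6$ exactly as above. The point to check is that the cube of $Y$ can be linearised using the uniform $L^\infty_tH^1_x$ bound from \eqref{eq:Qbounds}.
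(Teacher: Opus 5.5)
Your derivation of \eqref{eq:localQ}--\eqref{eq:RR}, the pointwise bound on $Q:S$, the Young splitting, and the final linearisation $(Y^\eps_{R/2})^3\le C_T^2Y^\eps_{R/2}$ all match the paper's proof. There is, however, a gap in the localized Sobolev step for the sextic term. You take $w=\eta_{R/2}^{1/2}Q^\eps$. The commutator $\nabla(\eta_{R/2}^{1/2})\,Q^\eps$ is then supported in the annulus $A_{R/2}=\{R/2<|x|<R\}$, which is exactly where $\eta_{R/2}$ rises from $0$ to $1$. Consequently your bound $R^{-2}\int_{A_{R/2}}|Q^\eps|^2\lesssim Y^\eps_{R/2}$ is false. $Y^\eps_{R/2}$ weights $|Q^\eps|^2$ by $\eta_{R/2}$, which vanishes on the inner sphere $|x|=R/2$. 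If $Q^\eps$ is concentrated near that sphere, the ratio of the two sides is unbounded for fixed $R$. Equivalently, $|\nabla\eta^{1/2}|^2=|\nabla\eta|^2/(4\eta)$ is at best $O(R^{-2})$, never $O(\eta)$. (Even that $O(R^{-2})$ bound needs a Hessian bound on the cut-off, which \eqref{eq:etaR} does not state.)

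The cut-off fed into Sobolev must have its whole support, gradient included, inside $\{\eta_{R/2}=1\}\supseteq\{|x|\ge R\}$, while still covering $\supp\eta_R$. Since $\supp\eta_R$ may reach $|x|=R$, the clean way is to apply Sobolev on the exterior domain $\{|x|>R\}$. Its $H^1\to L^6$ constant is uniform for $R\ge1$: reflect across $|x|=R$ via $x\mapsto(2R-|x|)x/|x|$ and cut off. This is the tail Sobolev estimate the paper also invokes in the proof of Proposition \ref{prop:tail}, and it gives
\[
\int\eta_R|Q^\eps|^6\le\int_{|x|>R}|Q^\eps|^6\le C\Bigl(\int_{|x|>R}\bigl(|\nabla Q^\eps|^2+|Q^\eps|^2\bigr)\Bigr)^3\le C\Bigl(\int\eta_{R/2}\bigl(|\nabla Q^\eps|^2+|Q^\eps|^2\bigr)\Bigr)^3,
\]
after which your linearisation closes the argument. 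This is precisely the inequality the paper states. Alternatively, keep a cut-off argument with $w=\eta_{R/2}Q^\eps$ (no square root, so no Lipschitz issue) and accept $Y^\eps_{R/4}$ on the right, using $\eta_{R/4}=1$ on $A_{R/2}$. That weaker estimate is still enough for Proposition \ref{prop:tail}, where $Z_{R/2}$ is simply replaced by $Z_{R/4}$ before letting $R\to\infty$.
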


\begin{proof}
Taking the Frobenius product of \eqref{eq:epsQ} with $\eta_RQ^\eps$ gives
\[
\frac12\frac{d}{dt}\int\eta_R|Q^\eps|^2
+\int \eta_Ru^\eps\cdot\nabla\frac{|Q^\eps|^2}{2}
=\int\eta_R Q^\eps:S+\Gamma\int\eta_RQ^\eps:H^\eps.
\]
The transport term is transformed into the boundary contribution in \eqref{eq:RR}.  Since
\[
Q:H=-L|\nabla Q|^2+L\diver(Q:\nabla Q)-a|Q|^2+b\tr(Q^3)-c|Q|^4,
\]
we obtain \eqref{eq:localQ}; the divergence term $L\diver(Q:\nabla Q)$ yields the second boundary contribution in \eqref{eq:RR}.

For \eqref{eq:QScontrol}, from \eqref{eq:S},
\[
|Q:S(\nabla u,Q)|\le C_\xi|\nabla u|(|Q|+|Q|^2+|Q|^3).
\]
Thus
\[
\int\eta_R|Q:S|\le \delta\int\eta_R|\nabla u|^2+C_\delta\int\eta_R(|Q|^2+|Q|^4+|Q|^6).
\]
The $|Q|^2$ and $|Q|^4$ terms are controlled by $Y_R^\eps$.  For $|Q|^6$, using a slightly wider cut-off and Sobolev,
\[
\int\eta_R|Q|^6\le C\left(\int\eta_{R/2}(|\nabla Q|^2+|Q|^2)\right)^3.
\]
The global bound in Lemma \ref{lem:naturalbounds} gives
\[
\left(\int\eta_{R/2}(|\nabla Q|^2+|Q|^2)\right)^3
\le C_T\int\eta_{R/2}(|\nabla Q|^2+|Q|^2),
\]
which is controlled by $Y_{R/2}^\eps$ once $M$ is chosen as below.
\end{proof}

\subsection{Tail tightness}

Choose $M\ge1$ so large that
\begin{equation}\label{eq:modifiedcoercive}
\frac12|u|^2+\frac L2|\nabla Q|^2+\frac a2|Q|^2-\frac b3\tr(Q^3)+\frac c4|Q|^4+\frac M2|Q|^2
\ge c_0\left(|u|^2+|\nabla Q|^2+|Q|^2+|Q|^4\right).
\end{equation}
Define the modified tail energy
\begin{equation}\label{eq:YR}
Y_R^\eps(t):=\int \eta_R\left(e^\eps+\frac M2|Q^\eps|^2\right)\dd x.
\end{equation}

\begin{lemma}[Boundary fluxes vanish uniformly]\label{lem:flux}
For every $T>0$,
\begin{equation}\label{eq:fluxvanish}
\lim_{R\to\infty}\sup_\eps\int_0^T\left|
\int \calJ^\eps\cdot\nabla\eta_R\dd x\right|\dd t=0,
\end{equation}
\begin{equation}\label{eq:Rvanish}
\lim_{R\to\infty}\sup_\eps\int_0^T|\mathcal R_R^\eps(t)|\dd t=0,
\end{equation}
and the hyperviscous commutator in \eqref{eq:hypererror} is uniformly $o_R(1)$ after absorbing $\frac\eps2\int\eta_R|\Delta u^\eps|^2$ into the left-hand side.
\end{lemma}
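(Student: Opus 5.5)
The plan is to bound each term of the flux $\calJ^\eps\cdot\nabla\eta_R$, the remainder $\mathcal R_R^\eps$, and the hyperviscous commutator by $|\nabla\eta_R|\le C/R$ (respectively $|\Delta\eta_R|\le C/R^2$) times a space–time integral over $A_R$. Each such integral will be shown to be bounded uniformly in $\eps$ by a constant depending only on $T$ through Lemma \ref{lem:naturalbounds}. Then every term is $O(R^{-\alpha})$ for some $\alpha>0$, uniformly in $\eps$, which gives \eqref{eq:fluxvanish} and \eqref{eq:Rvanish}. No smallness of the tails is needed at this stage; the explicit power of $R$ from the cut-off does all the work. One must check, however, that the growth of $|A_R|$ never outweighs the factor $R^{-1}$. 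For this reason I will use Hölder only with global norms, never with $L^\infty$ bounds multiplied by $|A_R|$.

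The steps, term by term, are as follows.

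\textbf{Hyperviscous commutator.} We have $\eps\int_0^T\int |\nabla\eta_R|^2|\nabla u^\eps|^2\le C\eps R^{-2}\|\nabla u^\eps\|_{L^2L^2}^2$ and $\eps\int_0^T\int|\Delta\eta_R|^2|u^\eps|^2\le C\eps R^{-4}T\|u^\eps\|_{L^\infty L^2}^2$. Both are $O(R^{-2})$ uniformly in $\eps\le1$.

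\textbf{Remainder $\mathcal R_R^\eps$.} The term $R^{-1}\int|Q||\nabla Q|$ is at most $R^{-1}\|Q\|_{L^2}\|\nabla Q\|_{L^2}$. The term $R^{-1}\int|u||Q|^2$ is at most $R^{-1}\|u\|_{L^2}\|Q\|_{L^4}^2$, which is bounded in $L^\infty_t$ via $H^1\hookrightarrow L^4$.

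\textbf{Energy flux $u^\eps e^\eps$.} Write $|u||\nabla Q|^2\le|u|_{L^6}|\nabla Q|_{L^{3}}|\nabla Q|_{L^2}$, which is integrable in time because $u\in L^2H^1$ and $\nabla Q\in L^4L^3$ by \eqref{eq:Qextra}. The bulk part has $|u||Q|^4\lesssim \|u\|_{L^6}\|Q\|_{L^{24/5}}^4$, and $\|Q\|_{L^{24/5}}$ is bounded by $H^1$. The cubic term $|u|^3$ is $\lesssim\|u\|_{L^2}^{3/2}\|\nabla u\|_{L^2}^{3/2}$, which lies in $L^{4/3}_t$.

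\textbf{Viscous and stress terms.} For the viscous term, $\mu|\nabla u||u|\in L^1_{t,x}$. For the stresses, $|\tau+\sigma|\lesssim(1+|Q|)|H|+|\nabla Q|^2$, with $H\in L^2L^2$. Paired with $u$, this gives $\int(1+|Q|)|H||u|\le\|H\|_{L^2}(\|u\|_{L^2}+\|Q\|_{L^3}\|u\|_{L^6})$, which is integrable in time.

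\textbf{Last flux term.} We have $|\nabla Q||S+\Gamma H|\lesssim|\nabla Q|(|\nabla u|(1+|Q|^2)+|H|)$. Here $|\nabla Q||\nabla u||Q|^2$ is handled by $\|\nabla u\|_{L^2}\|\nabla Q\|_{L^3}\|Q\|_{L^{12}}^2$. The factor $\|Q\|_{L^{12}}$ is not controlled by $H^1$ alone, but $\|Q\|_{L^{12}}\lesssim \|Q\|_{L^6}^{1/2}\|Q\|_{L^\infty}^{1/2}$ and $\|Q\|_{L^\infty}\lesssim\|Q\|_{H^1}^{1/2}\|Q\|_{H^2}^{1/2}$ (Agmon). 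Hence $\|Q\|_{L^{12}}^2\in L^{4}_t$. Combined with $\|\nabla u\|_{L^2}\in L^2_t$ and $\|\nabla Q\|_{L^3}\in L^4_t$, Hölder in time closes.

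\textbf{Pressure flux.} This is the main obstacle, because $p^\eps$ is nonlocal. I would recover $p^\eps$ from $-\Delta p^\eps=\diver\diver(u^\eps\otimes u^\eps-\tau^\eps-\sigma^\eps)$ (the hyperviscous term is divergence free and contributes nothing), so Calderón–Zygmund gives $\|p^\eps\|_{L^r}\lesssim\|u\otimes u\|_{L^r}+\|\tau+\sigma\|_{L^r}$. The obstruction is that $\tau$ contains $QH$, which lies only in $L^2_tL^{3/2}_x$. The plan is to estimate $\int|p||u|\le\|p\|_{L^{3/2}}\|u\|_{L^3}$, splitting $p$ into a Navier–Stokes part and a stress part. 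The Navier–Stokes part satisfies $\|u\otimes u\|_{L^{3/2}}\|u\|_{L^3}\lesssim\|u\|_{L^3}^3\in L^1_t$. The stress part requires $\|\tau+\sigma\|_{L^{3/2}}\lesssim\|H\|_{L^2}(1+\|Q\|_{L^6})+\|\nabla Q\|_{L^3}^2$, paired with $\|u\|_{L^3}\in L^4_t$. Both pieces are uniformly integrable on $(0,T)$. If the $L^{3/2}$–$L^3$ pairing turns out to be borderline for some term, I would instead split $p$ into near-field and far-field parts relative to $A_R$, following the standard suitable-weak-solution argument.
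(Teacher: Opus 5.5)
Your strategy matches the paper's. You bound $\calJ^\eps$, the integrands of $\mathcal R_R^\eps$, and the hyperviscous commutator in $L^1((0,T)\times\R^3)$ uniformly in $\eps$ using only global norms, and the cut-off factors $C/R$ and $C/R^2$ finish the argument. Most of your H\"older bounds are correct, some with different splittings. For example, for $|\nabla Q||\nabla u||Q|^2$ you go through Agmon and $Q\in L^4_tL^{12}_x$, while the paper simply uses $\nabla Q\in L^2L^6$, $\nabla u\in L^2L^2$, $|Q|^2\in L^\infty L^3$. The pressure step, however, fails as written, and the stress bound it rests on is incomplete.

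\textbf{The missing cubic stress term.} Your pointwise bound $|\tau+\sigma|\lesssim(1+|Q|)|H|+|\nabla Q|^2$ drops the cubic term $2\xi Q\tr(QH)$ of $\tau$, which has size $|Q|^2|H|$. In the direct flux $(\tau+\sigma)u$ this is harmless, since $\int|Q|^2|H||u|\le\|Q\|_{L^6}^2\|H\|_{L^2}\|u\|_{L^6}\in L^1_t$, but it must be included.

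\textbf{The pressure estimate.} The estimate $\|\tau+\sigma\|_{L^{3/2}}\lesssim\|H\|_{L^2}(1+\|Q\|_{L^6})+\|\nabla Q\|_{L^3}^2$ is false on $\R^3$. Expanding $Q+\frac13\Id$ shows that $\tau$ contains the linear term $-\frac{2\xi}{3}H$. Since $H$ contains $L\Delta Q-aQ$, it is only controlled in $L^2_x$, which does not embed into $L^{3/2}_x$ on an unbounded domain. Likewise, under the $H^1$ bound $|Q|^2|H|$ lies only in $L^{6/5}_x$. So no single $L^{3/2}$--$L^3$ pairing can close.

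\textbf{The fix.} You do not need the near/far-field splitting you mention as a fallback. It suffices to use linearity of the solution operator $F\mapsto(-\Delta)^{-1}\partial_i\partial_jF_{ij}$ and split $p^\eps$ according to its sources:
\begin{enumerate}[label=(\roman*)]
\item the part generated by $H$ lies in $L^2_tL^2_x$ and is paired with $u^\eps\in L^\infty_tL^2_x$;
\item the parts generated by $u^\eps\otimes u^\eps$, $QH$, $HQ$, $\tr(QH)\Id$ and $\nabla Q\odot\nabla Q$ lie in $L^2_tL^{3/2}_x$ and are paired with $u^\eps\in L^2_tL^3_x$;
\item the part generated by $Q\tr(QH)$ lies in $L^2_tL^{6/5}_x$ and is paired with $u^\eps\in L^2_tL^6_x$.
\end{enumerate}
This is exactly the paper's decomposition $p^\eps\in L^2L^2+L^2L^{3/2}+L^2L^{6/5}+L^2L^{3/2}$. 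With it, and with the $|Q|^2|H|$ term restored, your proof goes through.
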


\begin{proof}
We first record the integrability of the flux.  From Lemma \ref{lem:naturalbounds},
\[
u^\eps\in L^\infty_tL^2_x\cap L^2_tL^6_x,
\quad
Q^\eps\in L^\infty_tH^1_x\cap L^2_tH^2_x,
\quad
H^\eps\in L^2_{t,x},
\]
which imply
\[
Q^\eps\in L^{10}_{t,x}\cap L^4_tL^{12}_x,
\qquad
\nabla Q^\eps\in L^4_tL^3_x\cap L^2_tL^6_x.
\]
Then the terms $u^\eps e^\eps$ are in $L^1_{t,x}$:
\[
|u|^3\in L^1,
\quad
|u||\nabla Q|^2\in L^1,
\quad
|u||Q|^4\in L^1.
\]
The last one follows from $u\in L^\infty_tL^2_x$ and $Q\in L^4_tL^8_x$.

The stress flux is also integrable.  The tensor $\tau+\sigma$ is bounded pointwise by
\[
C\left(|H|+|Q||H|+|Q|^2|H|+|\nabla Q|^2\right).
\]
Multiplying by $u$, the terms are controlled respectively by
\[
H\in L^2L^2,
u\in L^2L^2;
\]
\[
Q\in L^\infty L^6,
H\in L^2L^2,
u\in L^2L^3;
\]
\[
Q^2\in L^\infty L^3,
H\in L^2L^2,
u\in L^2L^6;
\]
and
\[
\nabla Q\in L^4L^3,
\nabla Q\in L^4L^3,
u\in L^2L^3.
\]
Here and below $L^pL^q$ abbreviates $L^p(0,T;L^q(\R^3))$.

For the last part of $\calJ^\eps$,
\[
\nabla Q:(S+\Gamma H),
\]
the term $\nabla Q:H$ is in $L^1$.  The remaining terms are bounded by
\[
|\nabla Q|\,|\nabla u|(1+|Q|+|Q|^2),
\]
which are integrable by using respectively
\[
\nabla Q\in L^\infty L^2,
\nabla u\in L^2 L^2;
\]
\[
\nabla Q\in L^2L^6,
\nabla u\in L^2L^2,
Q\in L^\infty L^3;
\]
and
\[
\nabla Q\in L^2L^6,
\nabla u\in L^2L^2,
Q^2\in L^\infty L^3.
\]

The pressure is recovered from
\[
-\Delta p^\eps=\partial_i\partial_j\left(u_i^\eps u_j^\eps-(\tau^\eps+
\sigma^\eps)_{ij}\right).
\]
Calderon--Zygmund estimates \cite{CalderonZygmund1952} and the bounds just listed give
\[
p^\eps\in L^2L^2+L^2L^{3/2}+L^2L^{6/5}+L^2L^{3/2}.
\]
Pairing with
\[
u^\eps\in L^2L^2\cap L^2L^3\cap L^2L^6
\]
shows $p^\eps u^\eps\in L^1_{t,x}$.  Hence
\[
\norm{\calJ^\eps}{L^1((0,T)\times\R^3)}\le C_T
\]
independently of $\eps$, and therefore
\[
\int_0^T\left|\int\calJ^\eps\cdot\nabla\eta_R\right|\dd t
\le \frac{C}{R}\norm{\calJ^\eps}{L^1_{t,x}}
\le \frac{C_T}{R}\to0.
\]

For \eqref{eq:Rvanish}, use \eqref{eq:RR} and the global bounds:
\[
\frac1R\int_0^T\int_{A_R}|u||Q|^2\le \frac{C_T}{R},
\qquad
\frac1R\int_0^T\int_{A_R}|Q||\nabla Q|\le \frac{C_T}{R}.
\]
Finally, \eqref{eq:hypererror} gives
\[
C\eps\int_0^T\int_{A_R}\left(R^{-2}|\nabla u^\eps|^2+R^{-4}|u^\eps|^2\right)
\le C_T\eps(R^{-2}+R^{-4})=o_R(1),
\]
while the term $\frac\eps2\int\eta_R|\Delta u^\eps|^2$ is absorbed.
\end{proof}

\begin{proposition}[Uniform tail tightness]\label{prop:tail}
For every $T>0$,
\begin{equation}\label{eq:Ytail}
\lim_{R\to\infty}\sup_\eps\sup_{0\le t\le T}Y_R^\eps(t)=0,
\end{equation}
and
\begin{align}
&\lim_{R\to\infty}\sup_\eps\int_0^T\int_{|x|>R}
\left(|\nabla u^\eps|^2+|H^\eps|^2+|\nabla Q^\eps|^2+|Q^\eps|^4\right)\dd x\dd t=0,
\label{eq:dissTail}\\
&\lim_{R\to\infty}\sup_\eps\int_0^T\int_{|x|>R}|\Delta Q^\eps|^2\dd x\dd t=0.
\label{eq:DeltaTail}
\end{align}
\end{proposition}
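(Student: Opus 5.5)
The plan is to combine the localized physical energy identity of Lemma \ref{lem:localphys} with $\eta=\eta_R$ and $M$ times the localized $Q$ estimate of Lemma \ref{lem:localQ}. This gives a differential inequality for the modified tail energy $Y_R^\eps$, which we then close by a Gronwall-type argument.

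First, we justify using $\eta_R$, which is not compactly supported. Apply Lemma \ref{lem:localphys} with $\eta_R\chi_\rho$, where $\chi_\rho$ is a large-scale cutoff, and let $\rho\to\infty$ using the $L^1_{t,x}$ integrability of $\calJ^\eps$ established in Lemma \ref{lem:flux}. Adding $M$ times \eqref{eq:localQ} then yields
\begin{align*}
\frac{d}{dt}Y_R^\eps
+\int\eta_R\Bigl(\mu|\nabla u^\eps|^2+\Gamma|H^\eps|^2+\tfrac\eps2|\Delta u^\eps|^2
+M\Gamma L|\nabla Q^\eps|^2+M\Gamma c|Q^\eps|^4\Bigr)\dd x
\le M\int\eta_R|Q^\eps:S|\dd x+CM\int\eta_R(|Q^\eps|^2+|Q^\eps|^3)\dd x+\mathcal{K}_R^\eps(t),
\end{align*}
where $\mathcal K_R^\eps$ collects the flux term $-\int\calJ^\eps\cdot\nabla\eta_R$, the remainder $M\mathcal R_R^\eps$, and the hyperviscous commutator.

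Second, we absorb the $\eta_R|Q|^3$ and $\eta_R|Q|^2$ terms. Write $|Q|^3\le\delta|Q|^4+C_\delta|Q|^2$ and bound $\int\eta_R|Q|^2\le c_0^{-1}Y_R^\eps$ via \eqref{eq:modifiedcoercive}. Next apply \eqref{eq:QScontrol} with $\delta=\mu/(2M)$, which absorbs the velocity gradient and leaves $C_TY_{R/2}^\eps$. By Lemma \ref{lem:flux}, $\int_0^T|\mathcal K_R^\eps|\,dt=:k(R)\to0$ uniformly in $\eps$. Integrating in time, we obtain
\[
Y_R^\eps(t)+\tfrac12\int_0^t\!\!\int\eta_R\,\mathcal D^\eps
\le Y_R^\eps(0)+C_T\int_0^tY_{R/2}^\eps\,ds+k(R).
\]

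The main obstacle is that the right side involves $Y_{R/2}$ instead of $Y_R$, so Gronwall does not close directly. I would first try to redo the $|Q|^6$ estimate in Lemma \ref{lem:localQ} with $\eta_R^2$ or a Sobolev inequality applied to $\sqrt{\eta_R}Q$. This would give $\int\eta_R|Q|^6\le C(\int|\nabla(\sqrt{\eta_R}Q)|^2)^3\le C_T\,(Y_R+R^{-2}\|Q\|_{L^2}^2)$, with the gradient of the cutoff producing only an $O(R^{-2})$ error, provided $\eta_R$ is chosen so that $|\nabla\sqrt{\eta_R}|\lesssim R^{-1}$. Then the estimate closes at a single scale, and Gronwall gives $\sup_t Y_R^\eps\le e^{C_TT}(Y_R^\eps(0)+k(R)+C_TR^{-2})$. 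The term $Y_R^\eps(0)\to0$ uniformly because $Q_0^\eps\to Q_0$ in $H^1$ and $u_0^\eps\to u_0$ in $L^2$ converge strongly; for $|Q|^4$ we use $H^1\subset L^4$. Failing that, I would iterate over dyadic scales $R,R/2,\dots$ finitely many times, using the uniform bound $Y\le C_T$, which gives a power of $T$ times $C_T$ that is small only for short times, and then iterate in time steps.

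Finally, \eqref{eq:dissTail} follows from the integrated inequality, since the dissipation term controls $|\nabla u|^2$, $|H|^2$, $|\nabla Q|^2$ and $|Q|^4$ on $\{|x|>2R\}$. Relabeling $R$ handles the annulus. For \eqref{eq:DeltaTail}, use $L\Delta Q=H+aQ-\Bbulk(Q)$. The $Q$ term is controlled by $Y$. For the cubic term, $\int_0^T\int_{|x|>R}|Q|^6\le\int_0^T\|\sqrt{\eta}Q\|_{L^6}^6\,dt$, which by the Sobolev bound above is at most $C_T\sup_tY_{R/2}^\eps\to0$.
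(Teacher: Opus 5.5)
Your route works, but it closes the estimate differently from the paper. The paper never removes the scale shift. It keeps $Y_R^\eps(t)\le Y_R^\eps(0)+C_T\int_0^tY_{R/2}^\eps\,ds+o_R(1)$ and passes to $Z_R(t)=\limsup_{\eps\to0}\sup_{s\le t}Y_R^\eps(s)$. It then uses monotonicity of $Z_R$ in $R$, so that $Z_R$ and $Z_{R/2}$ have the same limit $Z$ as $R\to\infty$, and applies Gronwall to $Z(t)\le C_T\int_0^tZ\,ds$.

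You instead sharpen the localized sextic bound so that everything closes at the single scale $R$, and apply Gronwall at fixed $\eps,R$. This gives an explicit bound $e^{C_TT}(\sup_\eps Y_R^\eps(0)+k(R)+C_TR^{-2})$ that is genuinely a $\sup$ over $\eps$ rather than a $\limsup$. The price is choosing $\eta_R=\phi_R^2$ with $\phi_R$ a smooth cutoff. The paper's soft argument needs no change to Lemma \ref{lem:localQ}, but it needs $\eta_R$ nonincreasing in $R$, and its $\limsup$ must still be upgraded to the stated $\sup$.

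Your fallback also works on all of $[0,T]$, not only for short times. Iterating $k$ times over $R,R/2,\dots,R/2^k$ leaves a remainder $\frac{(C_TT)^k}{k!}\sup Y$, which is small for large $k$ whatever $T$ is.

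One intermediate step is false as written and must be repaired. The inequality $\int\eta_R|Q|^6\le C\bigl(\int|\nabla(\sqrt{\eta_R}Q)|^2\bigr)^3$ does not hold. Sobolev applied to $\sqrt{\eta_R}Q$ controls $\int\eta_R^3|Q|^6$, which is much smaller than $\int\eta_R|Q|^6$ where $\eta_R$ is small on the annulus. A bump concentrated where $\eta_R\approx\theta$ makes the ratio of the two sides of order $\theta^{-2}$.

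The fix is to put only two powers of $Q$ into the weighted Sobolev inequality and use the global $L^\infty_tL^6_x$ bound for the rest:
\[
\int\eta_R|Q^\eps|^6\dd x\le\norm{\sqrt{\eta_R}Q^\eps}{L^6}^2\norm{Q^\eps}{L^6}^4
\le C_T\left(\int\eta_R|\nabla Q^\eps|^2\dd x+\int|\nabla\sqrt{\eta_R}|^2|Q^\eps|^2\dd x\right).
\]
This is exactly the bound $C_T\bigl(Y_R^\eps+R^{-2}\norm{Q^\eps}{L^2}^2\bigr)$ you wanted, so the single-scale Gronwall argument then goes through. Your use of the same device in \eqref{eq:DeltaTail} is fine as written, since $\eta_{R/2}^3=1$ on $\{|x|>R\}$; keep the $R^{-2}$ term there as well.
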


\begin{proof}
Combining Lemma \ref{lem:localphys}, Lemma \ref{lem:localQ}, the estimate \eqref{eq:QScontrol}, and choosing $\delta$ sufficiently small, we obtain
\begin{align}
\frac{d}{dt}Y_R^\eps(t)
&+c_0\int\eta_R\left(|\nabla u^\eps|^2+|H^\eps|^2+|\nabla Q^\eps|^2+|Q^\eps|^4\right)\dd x
+\frac\eps2\int\eta_R|\Delta u^\eps|^2\dd x\notag\\
&\le C_TY_{R/2}^\eps(t)+\omega_R^\eps(t),
\label{eq:tailineq}
\end{align}
where
\[
\lim_{R\to\infty}\sup_\eps\int_0^T|\omega_R^\eps(t)|\dd t=0
\]
by Lemma \ref{lem:flux}.
Integrating in time,
\[
Y_R^\eps(t)\le Y_R^\eps(0)+C_T\int_0^tY_{R/2}^\eps(s)\dd s+
\int_0^t\omega_R^\eps(s)\dd s.
\]
Because $Q_0^\eps\to Q_0$ in $H^1$ and $u_0^\eps\to u_0$ in $L^2$, the initial data are uniformly tight:
\[
\lim_{R\to\infty}\sup_\eps Y_R^\eps(0)=0.
\]
Let
\[
Z_R(t)=\limsup_{\eps\to0}\sup_{0\le s\le t}Y_R^\eps(s).
\]
Then
\[
Z_R(t)\le Z_R(0)+C_T\int_0^tZ_{R/2}(s)\dd s+o_R(1).
\]
Letting $R\to\infty$ and using monotonicity in $R$ gives
\[
Z(t)\le C_T\int_0^tZ(s)\dd s,
\qquad
Z(t):=\lim_{R\to\infty}Z_R(t).
\]
Gronwall implies $Z(t)=0$, proving \eqref{eq:Ytail}.  The dissipative tail estimate \eqref{eq:dissTail} follows by integrating \eqref{eq:tailineq} and using \eqref{eq:Ytail}.

Finally,
\[
L\Delta Q^\eps=H^\eps+aQ^\eps-\Bbulk(Q^\eps),
\]
so
\[
|\Delta Q^\eps|^2\le C\left(|H^\eps|^2+|Q^\eps|^2+|Q^\eps|^4+|Q^\eps|^6\right).
\]
The first three tail terms are controlled by \eqref{eq:Ytail} and \eqref{eq:dissTail}.  For the last one, a tail Sobolev estimate gives
\[
\int_{|x|>R}|Q^\eps|^6\dd x
\le C\left(\int_{|x|>R/2}(|\nabla Q^\eps|^2+|Q^\eps|^2)\dd x\right)^3,
\]
which tends to zero uniformly after integration in time.  This proves \eqref{eq:DeltaTail}.
\end{proof}

\section{Compactness and the physical free-energy inequality}

\begin{proposition}[Compactness]\label{prop:compact}
There exists a subsequence, still denoted by $\eps$, and a pair $(Q,u)$ such that
\begin{align*}
Q^\eps&\rightharpoonup^* Q\quad\text{in }L^\infty(0,T;H^1),\qquad
Q^\eps\rightharpoonup Q\quad\text{in }L^2(0,T;H^2),\\
 u^\eps&\rightharpoonup^* u\quad\text{in }L^\infty(0,T;L^2),\qquad
u^\eps\rightharpoonup u\quad\text{in }L^2(0,T;H^1),\\
Q^\eps&\to Q\quad\text{strongly in }L^2(0,T;H^1(\R^3)),\\
u^\eps&\to u\quad\text{strongly in }L^2(0,T;L^2_{\loc}(\R^3)),\\
\Bbulk(Q^\eps)&\to \Bbulk(Q)\quad\text{strongly in }L^2(0,T;L^2),\\
H^\eps&\rightharpoonup H:=L\Delta Q-aQ+
\Bbulk(Q)
\quad\text{in }L^2(0,T;L^2).
\end{align*}
Moreover, $(Q,u)$ satisfies the weak formulation of \eqref{eq:BE}.
\end{proposition}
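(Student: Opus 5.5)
The plan is to combine the uniform bounds of Lemma \ref{lem:naturalbounds} with an Aubin--Lions argument for local compactness, and then use the tail tightness of Proposition \ref{prop:tail} to upgrade local strong convergence to global convergence on $\R^3$.

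\emph{Weak limits.} Weak-$*$ and weak limits in the stated spaces follow from \eqref{eq:ubounds}, \eqref{eq:Qbounds}, \eqref{eq:Hbound} by Banach--Alaoglu, and we may extract a common subsequence.

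\emph{Time-derivative bounds.} For $\partial_t Q^\eps$, equation \eqref{eq:epsQ} shows it is bounded in $L^{4/3}(0,T;L^{3/2}_{\loc})$ or a similar Lebesgue space. The relevant terms are:
\begin{itemize}
\item $u\cdot\nabla Q$, controlled by $u\in L^2L^6$ and $\nabla Q\in L^4L^3$;
\item $S$, which is bounded by $|\nabla u|(1+|Q|+|Q|^2)$ and is therefore in $L^2L^1_{\loc}$ using $Q\in L^\infty L^6$;
\item $H\in L^2L^2$.
\end{itemize}
For $\partial_t u^\eps$, we apply the Leray projection to \eqref{eq:epsu} and test against $H^3$ divergence-free fields. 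The convective term $u\otimes u\in L^1L^{3/2}$, the stress $\tau+\sigma\in L^1L^1$ (by the pointwise bound in Lemma \ref{lem:flux}), and $\eps\Delta^2 u=\sqrt\eps\,\Delta(\sqrt\eps\Delta u)$ is bounded by \eqref{eq:hyperbound}. This gives $\partial_t u^\eps$ bounded in $L^1(0,T;H^{-3}_{\loc})$.

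\emph{Local strong convergence.} Aubin--Lions--Simon on balls $B_R$, with $H^2\Subset H^1$ and $H^1\Subset L^2$, then gives $Q^\eps\to Q$ in $L^2(0,T;H^1(B_R))$ and $u^\eps\to u$ in $L^2(0,T;L^2(B_R))$. A diagonal argument in $R$ yields local convergence on all balls simultaneously.

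\emph{Global strong convergence of $Q^\eps$.} Strong $L^2(0,T;H^1(\R^3))$ convergence of $Q^\eps$ follows by splitting into $|x|<R$ and $|x|>R$. The exterior part is uniformly small by \eqref{eq:Ytail}, since $Y_R$ controls $|Q|^2+|\nabla Q|^2$; the same smallness holds for $Q$ by weak lower semicontinuity.

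\emph{Strong convergence of $\Bbulk(Q^\eps)$.} Since $|\Bbulk(Q^\eps)-\Bbulk(Q)|\le C|Q^\eps-Q|(|Q^\eps|+|Q|+|Q^\eps|^2+|Q|^2)$, we estimate the difference in $L^2_{t,x}$ by Hölder. The factor $|Q^\eps-Q|$ goes in $L^2L^6$, which tends to zero by the $H^1$ convergence. The quadratic factors go in $L^\infty L^3$, bounded since $Q\in L^\infty L^6$, and the linear factors in $L^\infty L^3$ similarly. This gives $L^2_{t,x}$ convergence, from which $H^\eps\rightharpoonup L\Delta Q-aQ+\Bbulk(Q)$ follows at once.

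\emph{Passage to the limit in the weak formulation.} This is the main obstacle, because products such as $Q^2H$ in $\tau$ and $Q^2\nabla u$ in $S$ pair weakly converging factors $H^\eps$ and $\nabla u^\eps$ with polynomials of $Q^\eps$. For such terms the polynomial factor must converge strongly in a space dual to the weak one, after testing against $\varphi\in C_c^\infty$. The terms are handled as follows:
\begin{itemize}
\item In $Q^2H:\varphi$, we need $Q^{\eps}\otimes Q^\eps\to Q\otimes Q$ in $L^2_{t,x}$ on $\supp\varphi$. This holds since $Q^\eps\to Q$ in $L^2L^6$, while $Q^\eps$ is bounded in $L^\infty L^6$, giving $L^2L^3\subset L^2L^2_{\loc}$.
\item In $|Q|^2\tr(Q\nabla u)$, the cubic factor must converge strongly in $L^2_{t,x}$ locally. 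Interpolating the $L^2L^6$ convergence with the $L^{10}_{t,x}$ bound from \eqref{eq:Qextra} gives strong convergence in $L^p_{t,x}$ for all $p<10$, in particular $L^6$, so $Q^3\to Q^3$ in $L^2_{\loc}$.
\item $\nabla Q\odot\nabla Q\to\nabla Q\odot\nabla Q$ in $L^1_{\loc}$ by the strong $H^1$ convergence.
\item $u\otimes u$ converges in $L^1_{\loc}$ by the local strong convergence of $u^\eps$.
\item The transport term $u\cdot\nabla Q$ converges as a product of a strongly and a weakly converging factor.
\item The hyperviscous term $\eps\int u^\eps\cdot\Delta^2\varphi\to0$.
\item Divergence-free and trace/symmetry constraints pass to the limit linearly.
\end{itemize}
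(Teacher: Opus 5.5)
Your proposal is correct and follows essentially the same route as the paper. Both arguments use Banach--Alaoglu for the weak limits, negative-Sobolev bounds on $\partial_tQ^\eps,\partial_tu^\eps$ with Aubin--Lions--Simon on balls, Proposition \ref{prop:tail} to upgrade to $L^2(0,T;H^1(\R^3))$, H\"older with the $L^6$ convergence for $\Bbulk(Q^\eps)$, and weak--strong products in the equations; you simply give more explicit exponents than the paper does.

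One minor slip: $\tau^\eps+\sigma^\eps$ contains the term $-\frac{2\xi}{3}H^\eps$, so globally it lies in $L^1(0,T;L^1+L^2)$ rather than $L^1(0,T;L^1)$. This does not affect your conclusion, because $L^1+L^2\hookrightarrow H^{-2}(\R^3)$ and the nonlocal Leray projection is bounded there, so the $L^1(0,T;H^{-3})$ bound for $\partial_tu^\eps$ still holds.
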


\begin{proof}
The weak convergences follow from Lemma \ref{lem:naturalbounds}.  The equations imply that $\partial_tQ^\eps$ and $\partial_tu^\eps$ are bounded in negative Sobolev spaces locally in space; hence the compactness lemma of Aubin, Lions and Simon gives local strong compactness \cite{Aubin1963,Lions1969,Simon1987}:
\[
Q^\eps\to Q\quad\text{in }L^2(0,T;H^1(B_R)),
\qquad
u^\eps\to u\quad\text{in }L^2(0,T;L^2(B_R))
\]
for every $R>0$.  Proposition \ref{prop:tail} upgrades the first convergence to the whole space,
\[
Q^\eps\to Q\quad\text{in }L^2(0,T;H^1(\R^3)).
\]
Interpolating with the uniform $L^\infty_tH^1_x$ bound gives
\[
Q^\eps\to Q\quad\text{in }L^p(0,T;L^6),\qquad 1\le p<\infty.
\]
This implies $\Bbulk(Q^\eps)\to \Bbulk(Q)$ strongly in $L^2_tL^2_x$.  Since $\Delta Q^\eps\rightharpoonup\Delta Q$ in $L^2_tL^2_x$, $H^\eps\rightharpoonup H$ follows.

The weak formulation is obtained by passing to the limit in the approximate equations.  The hyperviscous term vanishes because
\[
\norm{\eps\Delta^2u^\eps}{L^2(0,T;H^{-2})}
\le \eps\norm{\Delta u^\eps}{L^2(0,T;L^2)}
\le \sqrt\eps C_T\to0.
\]
All other nonlinear terms pass to the limit by the local strong convergence of $u^\eps,Q^\eps,\nabla Q^\eps$ and the weak convergence of $H^\eps,\nabla u^\eps$.
\end{proof}

\begin{proposition}[Physical free-energy inequality]\label{prop:physlimit}
The limit $(Q,u)$ satisfies \eqref{eq:physineq} for every $t\in[0,T]$.
\end{proposition}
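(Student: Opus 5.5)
We will pass to the limit $\eps\to0$ in the approximate identity \eqref{eq:epsphys} of Proposition \ref{prop:globalenergy}. The right-hand side converges to $\frac12\norm{u_0}{L^2}^2+\calF(Q_0)$ by \eqref{eq:initapprox}: the gradient and quadratic parts converge in $H^1$, and the cubic and quartic parts converge because $H^1\hookrightarrow L^3\cap L^4$ in $\R^3$. On the left-hand side we simply drop the nonnegative term $\eps\norm{\Delta u^\eps}{L^2}^2$. For the dissipation integrals we use Proposition \ref{prop:compact}. Since $\nabla u^\eps\rightharpoonup\nabla u$ and $H^\eps\rightharpoonup H$ weakly in $L^2(0,t;L^2)$, weak lower semicontinuity of the norm gives
\[
\int_0^t\left(\mu\norm{\nabla u}{L^2}^2+\Gamma\norm{H}{L^2}^2\right)\dd s\le\liminf_{\eps\to0}\int_0^t\left(\mu\norm{\nabla u^\eps}{L^2}^2+\Gamma\norm{H^\eps}{L^2}^2\right)\dd s .
\]

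The main obstacle is the pointwise-in-time term $\frac12\norm{u(t)}{L^2}^2+\calF(Q(t))$ for \emph{every} $t$. The compactness we have is only in $L^2_t$, and the bulk density is not convex, since $a$ may be negative and the cubic term has no sign. We therefore split $\calF=\calF_{\rm conv}+\calF_{\rm lo}$. The convex part is
\[
\calF_{\rm conv}(Q)=\int\Bigl(\frac L2|\nabla Q|^2+\frac c4|Q|^4\Bigr)\dd x ,
\]
and $\calF_{\rm lo}$ collects the $\frac a2|Q|^2$ and $-\frac b3\tr(Q^3)$ terms. We will prove two things.
\begin{enumerate}[label=(\alph*)]
\item $Q^\eps(t)\to Q(t)$ strongly in $L^2\cap L^3$ for every $t$, so that $\calF_{\rm lo}(Q^\eps(t))\to\calF_{\rm lo}(Q(t))$.
\item $u^\eps(t)\rightharpoonup u(t)$ weakly in $L^2$ and $Q^\eps(t)\rightharpoonup Q(t)$ weakly in $H^1$ for every $t$.
\end{enumerate}
Given (b), $\frac12\norm{u(t)}{L^2}^2+\calF_{\rm conv}(Q(t))\le\liminf_{\eps\to0}\bigl(\frac12\norm{u^\eps(t)}{L^2}^2+\calF_{\rm conv}(Q^\eps(t))\bigr)$. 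Here the $L^4$ part is handled by Fatou along an a.e.-convergent subsequence, or by weak lower semicontinuity in $L^4$.

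For (b), the equations bound $\partial_tu^\eps$ and $\partial_tQ^\eps$ uniformly in $L^{r}(0,T;H^{-s}(B_R))$ for some $r>1$ and large $s$; this uses the flux integrability already checked in Lemma \ref{lem:flux}. Together with the uniform bounds $u^\eps\in L^\infty_tL^2_x$ and $Q^\eps\in L^\infty_tH^1_x$ from Lemma \ref{lem:naturalbounds}, an Arzel\`a--Ascoli argument gives $u^\eps\to u$ in $C([0,T];L^2_w)$ and $Q^\eps\to Q$ in $C([0,T];H^1_w)$. This identifies the weak limits at every $t$ and also makes $u,Q$ weakly continuous, so that $u(t),Q(t)$ are well defined for every $t$.

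For (a), weak $H^1$ convergence yields strong convergence in $L^2(B_R)\cap L^3(B_R)$ by Rellich on each ball. The tail contributions $\int_{|x|>R}(|Q^\eps(t)|^2+|Q^\eps(t)|^3)\dd x$ are small uniformly in $\eps$ and $t$ by \eqref{eq:Ytail}, because the coercivity \eqref{eq:modifiedcoercive} makes $Y_R^\eps$ control $\int\eta_R(|Q^\eps|^2+|Q^\eps|^4)\dd x$, and $|Q|^3\le|Q|^2+|Q|^4$. The same bound holds for $Q(t)$ by lower semicontinuity. Hence $Q^\eps(t)\to Q(t)$ in $L^2(\R^3)\cap L^3(\R^3)$ for every $t$.

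Combining these pieces in the limit of \eqref{eq:epsphys}, we get \eqref{eq:physineq} for every $t\in[0,T]$. Since $T$ is arbitrary, this also gives the statement for every $t\ge0$ via a diagonal argument.
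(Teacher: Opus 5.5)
Your proof is correct, but it reaches the pointwise-in-time statement by a different route than the paper.

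The paper uses the strong convergence $Q^\eps\to Q$ in $L^2(0,T;H^1(\R^3))$ from Proposition \ref{prop:compact} to extract a subsequence with $Q^\eps(t)\to Q(t)$ strongly in $H^1$ for a.e.\ $t$. This gives $\calF(Q^\eps(t))\to\calF(Q(t))$ outright, with no convexity splitting. Together with $u^\eps(t)\rightharpoonup u(t)$ for a.e.\ $t$, it yields \eqref{eq:physineq} for a.e.\ $t$. The paper then extends to all $t$ in one sentence by invoking the weak-continuity representative.

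You instead work at an arbitrary fixed $t$, in three steps:
\begin{enumerate}[label=(\roman*)]
\item Uniform $L^r(0,T;H^{-s}(B_R))$ bounds on $\partial_t u^\eps$ and $\partial_t Q^\eps$ (in fact $r=2$ works) give convergence in $C([0,T];L^2_w)$ and $C([0,T];H^1_w)$.
\item Rellich on balls plus the uniform-in-time tail bound \eqref{eq:Ytail} upgrade this to strong $L^2\cap L^3$ convergence of $Q^\eps(t)$.
\item Splitting $\calF$ into a convex part (gradient and quartic, using $c>0$) and a lower-order part handles the fact that at a general $t$ you only have weak $H^1$ convergence.
\end{enumerate}
Incidentally, strong $L^3$ convergence plus the uniform $L^6$ bound already gives strong $L^4$ convergence by interpolation, so only the gradient term truly needs lower semicontinuity.

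The paper's route buys brevity and exact convergence of $\calF$ at a.e.\ time. Yours buys an honest proof of the ``every $t$'' claim. The paper's extension step is not automatic. Passing from a.e.\ $t$ to all $t$ along $t_k\to t$ requires lower semicontinuity of $\calF(Q(t_k))$ under weak $H^1$ convergence. That fails for $\frac a2|Q|^2$ when $a<0$, and for $-\frac b3\tr(Q^3)$, unless one separately knows strong $L^2\cap L^3$ continuity of $Q$ in time. That continuity is exactly what your steps (a)--(b) supply: time equicontinuity in a negative Sobolev space combined with tails that are small uniformly in $t$.
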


\begin{proof}
From \eqref{eq:epsphys}, dropping the nonnegative hyperviscous dissipation,
\begin{align*}
&\frac12\norm{u^\eps(t)}{L^2}^2+\calF(Q^\eps(t))+
\int_0^t\left(\mu\norm{\nabla u^\eps}{L^2}^2+\Gamma\norm{H^\eps}{L^2}^2\right)\dd s\\
&\qquad\le \frac12\norm{u_0^\eps}{L^2}^2+\calF(Q_0^\eps).
\end{align*}
The right-hand side converges to $\frac12\norm{u_0}{L^2}^2+\calF(Q_0)$.  The dissipative terms are weakly lower semicontinuous.  The strong convergence $Q^\eps\to Q$ in $L^2_tH^1_x$ yields, after extracting a further subsequence if necessary,
\[
Q^\eps(t)\to Q(t)\quad\text{strongly in }H^1
\]
for a.e. $t$.  Therefore $\calF(Q^\eps(t))\to\calF(Q(t))$ for a.e. $t$, while $u^\eps(t)\rightharpoonup u(t)$ in $L^2$ for a.e. $t$.  Passing to the lower limit gives \eqref{eq:physineq} for a.e. $t$.  The standard weak-continuity representative of Leray--Hopf solutions extends the inequality to all $t\in[0,T]$.
\end{proof}

\section{The low-order chain rule and recovery of the Leray--Hopf inequality}

Define
\begin{equation}\label{eq:Gdef}
\calG(Q):=\frac12\norm{Q}{L^2}^2-
\int_{\R^3}\left(\frac a2|Q|^2-\frac b3\tr(Q^3)+\frac c4|Q|^4\right)\dd x.
\end{equation}
Then
\begin{equation}\label{eq:Adef}
D\calG(Q)=(1-a)Q+
\Bbulk(Q)=:A(Q).
\end{equation}

\begin{lemma}[Low-order chain rule]\label{lem:chain}
For the weak solution obtained above,
\begin{align}
\calG(Q(t))-\calG(Q_0)
&=\int_0^t\left(A(Q),S(\nabla u,Q)\right)\dd s
+\Gamma\int_0^t\left(A(Q),H\right)\dd s
\label{eq:chain}
\end{align}
for every $t\in[0,T]$.
\end{lemma}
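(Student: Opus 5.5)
The identity \eqref{eq:chain} will follow from a chain rule for $\calG$ along $Q$, using only the regularity of the weak solution and the $Q$-equation read as an identity in $L^p$ spaces.

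\textbf{Step 1: time derivative of $Q$.} From the distributional $Q$-equation,
\[
\partial_tQ=-u\cdot\nabla Q+S(\nabla u,Q)+\Gamma H .
\]
Each term lies in a Lebesgue space:
\begin{itemize}
\item $H\in L^2_tL^2_x$.
\item $u\cdot\nabla Q\in L^2_tL^{3/2}_x$, using $u\in L^\infty L^2$ and $\nabla Q\in L^2L^6$. It also lies in $L^1_tL^2_x$, using $u\in L^2L^6$ and $\nabla Q\in L^2 L^3$ (the latter from $L^4L^3$).
\item $S$ is bounded by $|\nabla u|(1+|Q|+|Q|^2)$. Since $Q\in L^\infty L^6$ and $|Q|^2\in L^\infty L^3$, we get $S\in L^2_t(L^2+L^{3/2}+L^{6/5})_x$.
\end{itemize}
Hence $\partial_tQ\in L^1_t(L^{6/5}+L^2)_x$.

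\textbf{Step 2: the pairing.} We need $A(Q)=(1-a)Q+\Bbulk(Q)$ to lie in the dual space $L^\infty_t(L^6\cap L^2)_x$, or in a suitable mixed-norm variant. Indeed $Q\in L^\infty(L^2\cap L^6)$ and $Q^2\in L^\infty L^3$. The cubic term satisfies $|Q|^3\in L^\infty L^2$ and $|Q|^3\in L^2_tL^{6}_x$, because $Q\in L^6_tL^{18}_x$ follows from the $L^2H^2\cap L^\infty H^1$ interpolation.

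The worst product is $|Q|^3\,|\nabla u|\,|Q|^2$ in $(A(Q),S)$. Bound it by $\|\nabla u\|_{L^2}\,\|Q\|_{L^{10}}^5$ pointwise in time, which is integrable in time since $Q\in L^{10}_{t,x}$ and $\nabla u\in L^2_{t,x}$. This requires pairing $L^2_{t,x}$ with $|Q|^5\in L^2_{t,x}$, which is exactly the endpoint bound, with no compactness needed. Similarly, $(A(Q),u\cdot\nabla Q)$ and $(A(Q),H)$ are integrable. So every term of the right-hand side of \eqref{eq:chain} is in $L^1(0,t)$. This quantitative integrability is the main point and the place I expect difficulty: I must check each monomial against the available mixed norms, in particular $|Q|^5|\nabla u|$ and $|Q|^3|H|$, where $|Q|^3\in L^\infty L^2\cap L^{10/3}_{t,x}$.

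\textbf{Step 3: regularization.} Mollify in space with $\rho_\delta$, possibly combined with a cutoff $\chi_R$ for whole-space decay. For $Q_\delta=\rho_\delta*Q$ we have $Q_\delta\in W^{1,1}_t$ with values in smooth functions, so the classical identity
\[
\calG(Q_\delta(t))-\calG(Q_\delta(0))=\int_0^t(A(Q_\delta),\partial_tQ_\delta)\,ds
\]
holds. Pass $\delta\to0$: $A(Q_\delta)\to A(Q)$ strongly in the relevant dual norms, with dominated convergence in time using Step 2's bounds, while $\partial_tQ_\delta=\rho_\delta*\partial_tQ\to\partial_tQ$ in $L^1_t(L^{6/5}+L^2)$-type spaces.

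\textbf{Step 4: transport term and conclusion.} It remains to show that the transport term vanishes:
\[
\int (A(Q),u\cdot\nabla Q)=0 .
\]
Here $A(Q)=D\,g(Q)$ for the polynomial density $g$, so $A(Q):u\cdot\nabla Q=u\cdot\nabla g(Q)$. Since $\operatorname{div}u=0$ and $g(Q)\in L^\infty_t W^{1,1}$-type spaces with sufficient decay, the integral vanishes. Rigorously, test against $\chi_R$ and use $|\nabla\chi_R|\le C/R$ together with $u\,g(Q)\in L^1_{t,x}$, which holds by the same estimates as in Lemma \ref{lem:flux}.

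Finally, the identity holds for a.e. $t$ and then for every $t$. This uses the weak continuity of $Q$ in $H^1$, together with the continuity of $\calG$ along $Q\in C([0,T];L^2\cap L^4)$; the latter follows from $\partial_tQ\in L^1$ of a negative space and compactness of $Q$ in $L^\infty H^1$.
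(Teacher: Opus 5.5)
Your proposal is correct and follows essentially the paper's proof: the same monomial-by-monomial integrability check (with the worst term $|Q|^5|\nabla u|$ controlled by $Q\in L^{10}_{t,x}$ and $\nabla u\in L^2_{t,x}$), regularization plus the classical chain rule for $\calG$, and the same $A(Q)=Dg(Q)$ cutoff argument with $u\,g(Q)\in L^1_{t,x}$ to remove the transport term. The differences are minor: you mollify in space rather than in time, which is harmless and even convenient because the regularized identity then holds for every $t$ and passes to the limit pointwise in $t$ by dominated convergence, so your final continuity paragraph is not needed; but note that $A(Q)$ is not in $L^\infty_t(L^6\cap L^2)_x$ (its quadratic and cubic parts are only in $L^\infty_tL^3_x$ and $L^\infty_tL^2_x$), so the limit $\delta\to0$ has to be organized pair by pair, each monomial of $A$ against each piece of $\partial_tQ$ with the exponents from your Step 2, rather than through a single duality with $L^1_t(L^{6/5}+L^2)_x$.
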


\begin{proof}
The regularity
\[
Q\in L^\infty_tH^1_x\cap L^2_tH^2_x
\]
implies $Q\in L^{10}_{t,x}$, hence $|Q|^4Q\in L^2_{t,x}$ and $A(Q)\in L^2_{t,x}$.  Since $H\in L^2_{t,x}$, the term $(A(Q),H)$ is integrable.  Moreover,
\[
|A(Q):S(\nabla u,Q)|\le C|\nabla u|(|Q|+|Q|^2+|Q|^3+|Q|^4+|Q|^5),
\]
which is integrable because $\nabla u\in L^2_{t,x}$ and $Q\in L^{10}_{t,x}$.

The argument follows the standard time-mollification device used in energy identities for weak solutions \cite{Shinbrot1974,Galdi2011}. Let $\rho_\delta$ be a time mollifier and set $Q^\delta=\rho_\delta*_tQ$.  After multiplying the time-mollified $Q$-equation by $A(Q^\delta)$ and integrating on $(0,t)\times\R^3$, we may pass $\delta\to0$ by the strong convergence
\[
A(Q^\delta)\to A(Q)\quad\text{in }L^2_{\loc}((0,T)\times\R^3)
\]
and by the integrability estimates just stated.  The transport term vanishes: indeed, if
\[
g(Q)=\frac{1-a}{2}|Q|^2+\frac b3\tr(Q^3)-\frac c4|Q|^4,
\]
then $A(Q)=Dg(Q)$ and
\[
A(Q):u\cdot\nabla Q=u\cdot\nabla g(Q).
\]
Using spatial cutoffs $\chi_R$ and $\diver u=0$,
\[
\int_0^t\int \chi_R u\cdot\nabla g(Q)
=-\int_0^t\int g(Q)u\cdot\nabla\chi_R\to0
\]
as $R\to\infty$, since $g(Q)u\in L^1_{t,x}$.  We obtain \eqref{eq:chain}.
\end{proof}

\begin{proof}[Proof of Theorem \ref{thm:main}]
By Proposition \ref{prop:compact} we have a weak solution satisfying the natural Leray--Hopf regularity and the distributional equations.  By Proposition \ref{prop:physlimit}, it satisfies the physical free-energy inequality \eqref{eq:physineq}.  Adding the chain-rule identity \eqref{eq:chain} to \eqref{eq:physineq} yields the desired expanded inequality.

Indeed,
\[
\calF(Q)+\calG(Q)=\frac12\norm{Q}{L^2}^2+\frac L2\norm{\nabla Q}{L^2}^2.
\]
Furthermore,
\[
H-A(Q)=L\Delta Q-Q,
\]
so
\begin{align*}
\norm{H}{L^2}^2-(A(Q),H)
&=(H,L\Delta Q-Q)\\
&=L^2\norm{\Delta Q}{L^2}^2+(a+1)L\norm{\nabla Q}{L^2}^2+a\norm{Q}{L^2}^2\\
&\quad+L(\Bbulk(Q),\Delta Q)-(\Bbulk(Q),Q).
\end{align*}
This accounts for the dissipative terms and the term
\[
2\Gamma\int_0^t(\Bbulk(Q),Q-L\Delta Q)\dd s
\]
on the right-hand side of \eqref{eq:LHenergy}.

It remains to expand $(A(Q),S(\nabla u,Q))$.  Since $A(Q)$ is a polynomial in $Q$, it commutes with $Q$ and is symmetric.  Hence
\[
A(Q):(\Omega Q-Q\Omega)=0.
\]
Using \eqref{eq:S} and $\tr(Q\Omega)=0$,
\begin{align*}
(A(Q),S(\nabla u,Q))
&=2\xi\left(A(Q)\left(Q+\frac13\Id\right):\nabla u\right)
-2\xi\left(A(Q):\left(Q+\frac13\Id\right)\right)\tr(Q\nabla u).
\end{align*}
Substituting $A(Q)=(1-a)Q+\Bbulk(Q)$ gives exactly the two $\xi$-dependent lines on the right-hand side of \eqref{eq:LHenergy} after multiplying the whole inequality by $2$.  Thus \eqref{eq:LHenergy} holds for every $t\in[0,T]$.  Since $T>0$ was arbitrary, the construction is global.
\end{proof}

\section{Comments on nonstable bulk parameters}

The proof above uses the sign of the quartic term in the bulk potential. The following elementary reduction explains the restriction and gives a simple obstruction to a direct large-data extension of the same argument. It is included only to clarify the role of the assumption $c>0$; it is not used in the existence proof.

Let
\[
A_0=\operatorname{diag}(2,-1,-1)\in\Szero.
\]
Then
\[
A_0^2-\frac{\tr(A_0^2)}3\Id=A_0,
\qquad
\tr(A_0^2)=6.
\]
In the corotational case $\xi=0$, take $u=0$ and $Q(t,x)=q(t,x)A_0$.  The $Q$-equation reduces to
\begin{equation}\label{eq:scalarreduction}
\partial_t q=\Gamma\left(L\Delta q-aq+bq^2-6cq^3\right).
\end{equation}
When $c<0$, the cubic term is focusing, $-6cq^3=6|c|q^3$.  For nonnegative compactly supported data of sufficiently large size, Kaplan's eigenfunction method on a large ball yields finite-time blow-up of a positive moment of $q$ \cite{Kaplan1963}.  When $c=0$ and $b\ne0$, the quadratic term $bq^2$ is focusing after choosing the sign of $q$.  This reduction shows why the stable case is separated in the theorem. The case $c=0=b$ has no polynomial focusing term in the bulk force and can be treated as a separate linear-bulk case; it is not included here in order to keep the proof tied to the coercive potential used in the tail estimate.
\section*{Acknowledgments}
The authors gratefully acknowledge support from the Natural Science Research Project of Jiangsu Higher Education Institutions of China under Grant No. 24KJB520033.
\bibliographystyle{abbrvnat}
\bibliography{leray_hopf_stable_refs}

\end{document}